%%%%%%%%%%%%%%%%%%%%%%%
%Boris' paper requires these.
\documentclass[10pt]{article}
\usepackage{amsmath,amsthm,amsfonts,amssymb,amscd,enumerate,graphicx}
%refcheck}
%Boris' paper includes several commands here, including /newthereom...

\theoremstyle{plain}
\newtheorem{SConjecture}[subsection]{Singer's Conjecture}
\newtheorem{subSCConjecture}[subsection]{Singer's Conjecture for Coxeter groups}
\newtheorem{MTheorem}[subsection]{The Main Theorem}

\newtheorem{Proposition}[subsection]{Proposition}

\newtheorem{Cor}[subsection]{Corollary}

\newtheorem{Lem}[subsection]{Lemma}

\theoremstyle{definition}

\newtheorem{Remark}[subsection]{Remark}
\newtheorem{Example}[subsection]{Example}

\newcommand{\Card}{\operatorname{Card}}
\newcommand{\wh}{\widehat}

%Script letters
\newcommand{\cs}{\mathcal{S}}

\newcommand{\cf}{\mathcal{F}}
\newcommand{\cH}{\mathcal{H}}

%Sanserif letters

%L-2 
\newcommand{\Ltwo}{L^2}
%Ltwo homology h

\def\l{\operatorname{\ell}}
\newcommand{\ltwo}{\l^2}

%Greek letters
\newcommand{\gS}{\Sigma}

\newcommand{\gO}{\Omega}

\newcommand{\gs}{\sigma}

%Space names
\newcommand{\BS}{\mathbb{S}}

\newcommand{\BR}{\mathbb{R}}

\newcommand{\BN}{\mathbb{N}}

\bibliographystyle{amsalpha}
\numberwithin{equation}{section}
\setcounter{section}{0}

\begin{document}

%Title
\title{The $\ltwo$-homology of even Coxeter groups}

\author{Timothy A. Schroeder}

\date{July 17, 2007}
\maketitle
%*******

\begin{abstract}
Given a Coxeter system $(W,S)$, there is an associated CW-complex, denoted $\gS(W,S)$ (or simply $\gS$), on which $W$ acts properly and cocompactly.  This is the Davis complex.  $L$, the nerve of $(W,S)$, is a finite simplicial complex.  We prove that when $(W,S)$ is an \textit{even} Coxeter system and $L$ is a flag triangulation of $\BS^{3}$, then the reduced $\ltwo$-homology of $\gS$ vanishes in all but the middle dimension.  In so doing, our main effort will be examining a certain subspace of $\gS$ called the $(S,t)$-\emph{ruin}, for some $t\in S$.  To calculate the $\ltwo$-homology of this ruin, we subdivide a component of this ruin into subcomplexes we call \emph{colors} and then employ a series of  Mayer-Vietoris arguments, taking the union of these colors.  Once we have established the $\ltwo$-homology of the $(S,t)$-ruin, we will be able to calculate that of $\gS$.
\end{abstract}

\section{Introduction}\label{s:intro}
The following conjecture is attributed to Singer.

\begin{SConjecture}\label{conj:singer} If $M^{n}$ is a closed aspherical manifold, then the reduced $\ltwo$-homology of $\widetilde{M}^{n}$, $\cH_{\ast}(\widetilde{M}^n)$, vanishes for all $\ast\neq\frac{n}{2}$.
\end{SConjecture}

Singer's conjecture holds for elementary reasons in dimensions $\leq 2$.  Indeed, top-dimensional cycles on manifolds are constant on each component, so a square-summable cycle on an infinite component is constant $0$.  As a result, Conjecture \ref{conj:singer} in dimension $\leq 2$ follows from Poincar\'e duality.  In \cite{LL}, Lott and L\"uck proved that it holds for those aspherical $3$-manifolds for which Thurston's Geometrization Conjecture is true.  (Hence, by Parelman, all aspherical $3$-manifolds.)  For details on $\ltwo$-homology theory, see \cite{davismoussong}, \cite{do2} and \cite{eckmann}.  

Let $S$ be a finite set of generators.  A \emph{Coxeter matrix} on $S$ is a symmetric $S\times S$ matrix $M=(m_{st})$ with entries in $\BN\cup\{\infty\}$ such that each diagonal entry is $1$ and each off diagonal entry is $\geq 2$.  The matrix $M$ gives a presentation for an associated \emph{Coxeter} group $W$:
\begin{equation}\label{e:coxetergroup}
	W=\left\langle S\mid (st)^{m_{st}}=1, \text{ for each pair } (s,t) \text{ with } m_{st}\neq\infty\right\rangle.
\end{equation}
The pair $(W,S)$ is called a \emph{Coxeter system}.  Denote by $L$ the nerve of $(W,S)$.  In several papers (e.g., \cite{davisannals}, \cite{davisbook}, and \cite{davismoussong}), M. Davis describes a construction which associates to any Coxeter system $(W,S)$, a simplicial complex $\gS(W,S)$, or simply $\gS$ when the Coxeter system is clear, on which $W$ acts properly and cocompactly.  The two salient features of $\gS$ are that $(1)$ it is contractible and $(2)$ it admits a cellulation under which the nerve of each vertex is $L$.  It follows that if $L$ is a triangulation of $\BS^{n-1}$, $\gS$ is an $n$-manifold.  There is a special case of Singer's conjecture for such manifolds.  
 
\begin{subSCConjecture}\label{conj:singerc} Let $(W,S)$ be a Coxeter system such that its nerve, $L$, is a (weighted) triangulation of $\BS^{n-1}$.  Then 
\[\cH_{i}(\gS(W,S))=0 \text{ for all } i\neq\frac{n}{2}.\]
\end{subSCConjecture}

%system or group?
In \cite{do2}, Davis and Okun prove that if Conjecture \ref{conj:singerc} for \emph{right-angled} Coxeter systems is true in some odd dimension $n$, then it is also true for right-angled systems in dimension $n+1$.  (A Coxeter system is right-angled if generators either commute or have no relation.)  They also show that Thurston's Geometrization Conjecture holds for these Davis $3$-manifolds arising from right-angled Coxeter systems.  Hence, the Lott and L\"uck result implies that Conjecture \ref{conj:singerc} for right-angled Coxeter systems is true for $n=3$ and, therefore, also for $n=4$.  (Davis and Okun also show that Andreev's theorem, \cite[Theorem 2]{andreev2}, implies Conjecture \ref{conj:singerc} in dimension $3$ for right-angled systems.  In fact, using methods similar to those in \cite{do2}, one can show that Andreev's theorem implies \ref{conj:singerc} for arbitrary Coxeter systems.) 

Right-angled Coxeter systems are specific examples of \emph{even} Coxeter systems.  We say a Coxeter system is even if for any two generators $s\neq t$, $(st)^{m_{st}}=1$ implies that $m_{st}$ is even.  The purpose of this paper is to prove the following:

\begin{MTheorem}\label{t:main} Let $(W,S)$ be an even Coxeter system whose nerve $L$ is a flag triangulation of $\BS^3$.  Then $\cH_{i}(\gS(W,S))=0$ for $i\neq 2$.
\end{MTheorem}

In order to prove Theorem \ref{t:main}, we define a certain subspace $\gO$ of $\gS$, and its boundary $\partial\gO$.  We call the pair $(\gO,\partial\gO$) a \emph{ruin}.  We then subdivide $\gO$ into subspaces we call ``boundary collars,'' which are isomorphic to $B\times\left[0,1\right]$, where $B$ is a component of $\partial\gO$.  We paint these boundary collars finitely many \emph{colors}, which can be categorized as even or odd.  The painting of $\gO$ is virtually invariant under the group action on $\gO$.  Moreover, the intersection of two even colors is $2$-acyclic and the intersection of an odd color with all the evens is acyclic.  Then using Mayer-Vietoris, we are able to prove that $\cH_{\ast}(\gO,\partial\gO)=0$ for $\ast=3,4$.   

Next, we prove that for any $V\subseteq S$, and any $t\in V$, $\cH_{\ast}(\gS(W_V,V))\cong\cH_{\ast}(\gS(W_{V-t},V-t))$, where $W_V$ is the subgroup of $W$ generated by the elements of $V$.  It follows from induction and Poincar\'e duality that \ref{t:main} is true.
 
\section{Coxeter systems and the complex $\gS$}\label{s:coxeter}

\noindent
\textbf{Coxeter systems.}
 
Given a subset $U$ of $S$, define $W_{U}$ to be the subgroup of $W$ generated by the elements of $U$.  $(W_U,U)$ is a Coxeter system.  A subset $T$ of $S$ is \textit{spherical} if $W_T$ is a finite subgroup of $W$.  In this case, we will also say that the subgroup $W_{T}$ is spherical.  We say the Coxeter system $(W,S)$ is \textit{even} if for any $s,t\in S$ with $s\neq t$, $m_{st}$ is either even or infinite.

Given $w\in W$, we call an expression $w=s_{1}s_{2}\cdots s_{n}$ \emph{reduced} if there does not exist an integer $m<n$ with $w=s'_{1}s'_{2}\cdots s'_{m}$.  Define the \emph{length of $w$}, or $l(w)$, to be the integer $n$ such that $s_{1}s_{2}\cdots s_{n}$, $s_{i}\in S$, is a reduced expression for $w$.  Denote by $S(w)$ the set of elements of $S$ which comprise a reduced expression for $w$.  This set is well-defined, \cite[Proposition 4.1.1]{davisbook}.

For $T\subseteq S$ and $w\in W$, the coset $wW_{T}$ contains a unique element of minimal length.  This element is said to be $(\emptyset, T)$-reduced.  Moreover, it is shown in \cite[Ex. 3, pp. 31-32]{bourbaki}, that an element is $(\emptyset, T)$-reduced if and only if $l(wt)>l(w)$ for all $t\in T$.  Likewise, we can define the $(T,\emptyset)$-reduced elements to be those $w$ such that $l(tw)>l(w)$ for all $t\in T$.  So given $X,Y\subseteq S$, we say an element $w\in W$ is $(X,Y)$-reduced if it is both $(X,\emptyset)$-reduced and $(\emptyset,Y)$-reduced.

\textbf{Shortening elements of $W$.} We have the so-called ``Exchange'' (\textbf{E}) condition for Coxeter systems (\cite[Ch 4. Section 1, Lemma 3]{bourbaki} or \cite[Theorem 3.3.4]{davisbook}): 
\begin{itemize}
	\item\label{i:exchange} (\textbf{E}) Given a reduced expression $w=(s_1\cdots s_k)$ and an element $s\in S$, either $\ell(sw)=k+1$ or there is an index $i$ such that 
	\[sw=(s_1\cdots\wh{s_i}\cdots s_k).\]
\end{itemize}
In the case of even Coxeter systems, the parity of a given generator in the set expressions for an element of $W$ is well-defined.  (We prove this herein, Lemma \ref{l:T-hom}.)  So, in $(\textbf{E})$, $s_i=s$; i.e, if an element of $s\in S$ shortens a given element of $W$, it does so by deleting an instance of $s$ in an expression for $w$.

It is also a fact about Coxeter groups (\cite[Theorem 3.4.2]{davisbook}) that if two reduced expressions represent the same element, then one can be transformed into the other by replacing alternating subwords of the form $(sts\ldots)$ of length $m_{st}$ by the alternating word $(tst\ldots)$ of length $m_{st}$.  The proof of the first of the following two lemmas follows immediately from this.  

\begin{Lem}\label{l:onereduction} Let $t\in S$, $w\in W_{S-t}$ and $v\in W$ with $wtv$ reduced.  If there exists an $r\in S(w)-S(v)$ with $(rt)^2\neq 1$, then all $r$'s appears to the left of $t$ in any reduced expression for $wtv$.
\end{Lem}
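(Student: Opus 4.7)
The plan is to apply the theorem of Tits recalled just above the lemma (\cite[Theorem 3.4.2]{davisbook}): any two reduced expressions for the same element of $W$ differ by a finite sequence of braid moves, each of which replaces an alternating subword $(aba\cdots)$ of length $m_{ab}$ by $(bab\cdots)$ of the same length, for some pair $a,b\in S$. My strategy is to exhibit the property ``every $r$ precedes every $t$'' for the given expression $wtv$, and then verify that this property is invariant under each braid move.

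First, observe that $wtv$ itself has the claimed property: since $w\in W_{S-t}$, no $t$ appears in $w$, so every $r$ in $w$ precedes the middle $t$; and since $r\notin S(v)$, no $r$ occurs in $v$ at all. Hence every $r$ in $wtv$ lies strictly to the left of every $t$.

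Next I would analyze a single braid move by cases on the pair $\{a,b\}$. If $\{a,b\}\cap\{r,t\}=\emptyset$, neither $r$'s nor $t$'s are moved. If exactly one of $r,t$ lies in $\{a,b\}$, say $a=r$ and $b\notin\{r,t\}$ (the symmetric case is identical), then the move only permutes $r$'s inside an alternating $(rbrb\cdots)$ subword that contains no $t$, and so leaves unchanged the relative ordering of $r$'s with any $t$. The delicate case is $\{a,b\}=\{r,t\}$: the subword then has length $m_{rt}$, and the hypothesis $(rt)^{2}\neq 1$ forces $m_{rt}\geq 3$. Any alternating $\{r,t\}$-word of length at least $3$ contains a substring $rtr$ or $trt$, hence has some $r$ to the right of some $t$; but in an expression in which every $r$ already precedes every $t$, no such subword exists, so the braid move of this type is simply not applicable.

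Combining the cases, the property ``every $r$ precedes every $t$'' is preserved by each braid move, and so by Tits' theorem it holds for every reduced expression representing the element $wtv$. The only delicate point is the last case in the analysis, which uses precisely the hypothesis $(rt)^{2}\neq 1$; everything else is formal bookkeeping.
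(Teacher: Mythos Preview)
Your argument is correct and is exactly the approach the paper has in mind: the paper does not give a detailed proof but simply asserts that the lemma ``follows immediately'' from Tits' word theorem (\cite[Theorem 3.4.2]{davisbook}), and your case analysis of braid moves is precisely the verification that makes this immediate. In particular, your observation that an alternating $\{r,t\}$-subword of length $m_{rt}\geq 3$ already violates ``every $r$ precedes every $t$'' internally, so that the only dangerous braid move is never applicable, is the key point and uses the hypothesis $(rt)^2\neq 1$ exactly as intended.
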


\begin{Lem}\label{l:reduction} Let $(W,S)$ be an even Coxeter system, let $t,s\in S$ be such that $2<m_{st}<\infty$ and let $U_{st}=\{r\in S\mid m_{rt}=m_{rs}=2\}$.  Suppose that $tstw'=wtv$ (reduced) where $w'\in W$, $w\in W_{S-t}$ and $S(v)\subset U_{st}\cup\{s,t\}$.  Then $S(w)\subseteq U_{st}\cup\{s\}$. 
\end{Lem}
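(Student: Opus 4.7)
The plan is to argue by contradiction: assume there is some $r\in S(w)$ with $r\notin U_{st}\cup\{s\}$. Since $w\in W_{S-t}$ we have $r\neq t$, and since $r\notin U_{st}\cup\{s\}$ we have $r\neq s$ and at least one of $m_{rs},m_{rt}$ exceeds $2$. The hypothesis $S(v)\subseteq U_{st}\cup\{s,t\}$ also forces $r\notin S(v)$.

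First I would extract a length identity from the equality $tstw'=wtv$ of two reduced expressions: $\ell(w')=\ell(w)+\ell(v)-2$. Removing a $t$ on the left gives $twtv=stw'$ with $\ell(stw')=\ell(tstw')-1=\ell(wtv)-1$, hence $\ell(twtv)=\ell(wtv)-1$. So $t$ is a left descent of $wtv$; in fact $tstw'$ is itself a reduced expression for $wtv$ that begins with $t$ and has $s$ as its second letter. This structural consequence is what drives the contradiction in both of the cases below.

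If $m_{rt}\neq 2$, I apply Lemma \ref{l:onereduction} directly to $r$: every reduced expression for $wtv$ must place all occurrences of $r$ strictly to the left of $t$. But the reduced expression $tstw'$ has $t$ as its first letter, so no position lies to its left; yet $r\in S(wtv)$, a contradiction.

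If $m_{rt}=2$ and $m_{rs}\neq 2$, then $r$ commutes with $t$ and Lemma \ref{l:onereduction} does not apply directly. One instead invokes its $s$-analog (valid in an even Coxeter system by the same argument with $s$ in place of $t$). The task is to rewrite $wtv=\tilde w\cdot s\cdot\tilde v$ as a reduced expression with $\tilde w\in W_{S-s}$, $r\in S(\tilde w)$, and $r\notin S(\tilde v)$. When $s\notin S(w)$, take $\tilde w=wt$ and $\tilde v$ the tail of $v$ after its initial $s$ (the length identity shows $v$ must begin with $s$ in any reduced expression); the hypotheses of the $s$-analog are then immediate. When $s\in S(w)$, decompose $w=u s u'$ at the leftmost $s$ in a suitably chosen reduced expression for $w$. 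The main technical obstacle is the residual sub-case in which every reduced expression of $w$ forces $r$ strictly to the right of the leftmost $s$; I expect to handle this by using the commutation $rt=tr$ together with the even-parity braid relations to push an offending $r$ past the central $t$ of $wtv$, iterating the argument. Once the decomposition is in hand, the $s$-analog of Lemma \ref{l:onereduction} forces every $r$ to lie to the left of $s$ in any reduced expression for $wtv$; but in $tstw'$ the leftmost $s$ is at position $2$, preceded only by $t\neq r$, again contradicting $r\in S(wtv)$.
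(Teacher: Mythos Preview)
Your Case~1 ($m_{rt}\neq 2$) is correct and matches the paper's argument. The gap is in Case~2 ($m_{rt}=2$, $m_{rs}\neq 2$), exactly where you flag a ``main technical obstacle'': when $s\in S(w)$ and every reduced expression of $w$ places $r$ to the right of the leftmost $s$. Your plan to push $r$ past the central $t$ using $rt=tr$ does not produce the factorization $\tilde w\,s\,\tilde v$ you need, because $r$ does not commute with $s$ and there may be occurrences of $s$ on both sides of $r$ inside $w$; commuting $r$ with $t$ alone cannot move $r$ to the left of all the $s$'s, and you give no mechanism for removing $r$ from $S(\tilde v)$. (There is also a small slip in the $s\notin S(w)$ sub-case: $v$ need not begin with $s$, since it may begin with letters of $U_{st}$; this is easily repaired by absorbing such letters into $\tilde w$.)

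The paper resolves the difficulty by taking $w$ to be a counterexample of \emph{minimal length}. Minimality forbids $w$ from beginning with any element of $U_{st}$ or with $s$ (either would yield a shorter counterexample, the latter via the Exchange Condition applied to $tstw'$), so $w=r\,w_1$ with the bad letter $r$ in front; minimality again forces $S(w_1)\subseteq U_{st}\cup\{s\}$. Now left-multiply $tstw'=wtv$ by $t$: since $rt=tr$ one obtains $stw' = r\cdot(t\,w_1\,t\,v)$, and $t\,w_1\,t\,v$ lies in $W_{U_{st}\cup\{s,t\}}=W_{U_{st}}\times W_{\{s,t\}}$, so the right-hand side rewrites as $w''\,s\,v'$ with $w''$ beginning with $r$, $s\notin S(w'')$, and $S(v')\subset U_{st}\cup\{s,t\}$. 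The $s$-analog of Lemma~\ref{l:onereduction} then contradicts the reduced expression $stw'$. This minimal-counterexample reduction is precisely the idea your argument is missing.
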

\begin{proof} Suppose that $w$ is a counterexample of minimum length.  $w$ cannot start with an element of $U_{st}$, since if it did, multiplication on the left by this element would produce a shorter counterexample.  Nor can $w$ begin with $s$, since by the exchange condition, multiplication on the left by $s$ would cancel an $s$ in $w'$, producing a shorter counterexample.  Therefore, $w$ must start with some $r$ which either does not commute with $t$ or does not commute with $s$.  By minimality we may also assume that every element appearing after $r$ in $w$ is from $U_{st}\cup\{s\}$.  

If $r$ does not commute with $t$, then by \ref{l:onereduction}, $r$ appears to the left of $t$ in any reduced expression for $wtv$; a contradiction to $tstw'=wtv$.  If $r$ does commute with $t$ but does not commute with $s$, then multiply both sides of $tstw'=wtv$ by $t$ leaving $stw'=w''sv'$ (reduced) where $w''$ begins with $r$, $S(v')\in U_{st}\cup\{s,t\}$ and $s\notin S(w'')$.  Then, with $t$ in \ref{l:onereduction} replaced by $s$, we have that $r$ appears to the left of $s$ in any reduced expression for $wtv$; a contradiction to $stw'=w''sv'$.  
\end{proof}   

\noindent
\textbf{The complex $\gS$.}
 
Let $(W,S)$ be an arbitrary Coxeter system.  Denote by $\cs$ the poset of spherical subsets of $S$, partially ordered by inclusion.  Given a subset $V$ of $S$, let $\cs_{<V}:=\{T\in \cs|T\subset V\}$.  Similar definitions exist for $>, \leq,\geq$.  For any $w\in W$ and $T\in \cs$, we call the coset $wW_{T}$ a \emph{spherical coset}.  The poset of all spherical cosets we will denote by $W\cs$.  

The poset $\cs_{>\emptyset}$ is an abstract simplicial complex, denote it by $L$, and call it the \emph{nerve} of $(W,S)$.  The vertex set of $L$ is $S$ and a non-empty subset of vertices $T$ spans a simplex of $L$ if and only if $T$ is spherical.  

Let $K=|\cs|$, the geometric realization of the poset $\cs$.  It is the cone on the barycentric subdivision of $L$, the cone point corresponding to the empty set, thus a finite simplicial complex.  Denote by $\gS(W,S)$, or simply $\gS$ when the system is clear, the geometric realization of the poset $W\cs$.  This is the Davis complex.  The natural action of $W$ on $W\cs$ induces a simplicial action of $W$ on $\gS$ which is proper and cocompact.  $K$ includes naturally into $\gS$ via the map induced by $T \rightarrow W_{T}$, so we view $K$ as a subcomplex of $\gS$ and note that it is a strict fundamental domain for the action of $W$ on $\gS$.  

For any element $w\in W$, write $wK$ for the $w$-translate of $K$ in $\gS$.  Let $w,w'\in W$ and consider $wK\cap w'K$.  This intersection is non-empty if and only if $V=S(w^{-1}w)$ is a spherical subset.  In fact, $wK\cap w'K$ is simplicially isomorphic to $|\cs_{\left[V,T\right]}|$, the geometric realization of $\cs_{\left[V,T\right]}:=\{V'\in\cs\mid V\subseteq V'\subseteq T\}$.

\textbf{A cubical structure on $\gS$.}  For each $w\in W$, $T\in\cs$, denote by $w\cs_{\leq T}$ the subposet $\{wW_V\mid V\subseteq T\}$ of $W\cs$.  Put $n=\Card(T)$.  $|w\cs_{\leq T}|$ has the combinatorial structure of a subdivision of an $n$-cube.  We identify the sub-simplicial complex $|w\cs_{\leq T}|$ of $\gS$ with this coarser cubical structure and call it a \emph{cube of type $T$}.  Note that the vertices of these cubes correspond to spherical subsets $V\in\cs_{\leq T}$.   (For details on this cubical structure, see \cite{moussong}.)

\textbf{A cellulation of $\gS$ by Coxeter cells.}  $\gS$ has a coarser cell structure: its cellulation by ``Coxeter cells.''  (For reference, see \cite{davisbook},\cite{do2}, and \cite{ddjo}.)  Suppose that $T\in \cs$; then by definition $W_{T}$ is finite.  Take the canonical representation of $W_{T}$ on $\BR^{\Card(T)}$ and choose a point $x$ in the interior of a fundamental chamber.  The \emph{Coxeter cell of type $T$} is defined as the convex hull $C$, in $\BR^{\Card(T)}$, of $W_{T}x$ (a generic $W_{T}$-orbit).  The vertices of $C$ are in 1-1 correspondence with the elements of $W_{T}$.  Furthermore, a subset of these vertices is the vertex set of a face of $C$ if and only if it corresponds to the set of elements in a coset of the form $wW_{V}$, where $w\in W_{T}$ and $V\subset T$.  Hence, the poset of non-empty faces of $C$ is naturally identified with the poset $W_{T}\cs_{\leq T}:= \{wW_{V}|w\in W_{T}, V\subset T\}$.  Therefore, we can identify the simplicial complex $\gS(W_{T},T)$ with the barycentric subdivision of the Coxeter cell of type $T$. 

Now, for each $T\in \cs^{(k)}$ and $w\in W$, the poset $W\cs_{\leq wW_{T}}$ is isomorphic to the poset $W_T\cs_{\leq T}$ via the map $vW_{V}\rightarrow w^{-1}vW_{V}$.  Thus, the subcomplex of $\gS(W,S)$ which is obtained from the poset $W\cs_{\leq wW_{T}}$ may be identified with the barycentric subdivision of the $k$-cell of type $T$.  In this way, we put a cell structure on $\gS$ which is coarser than the simplicial structure by identifying each simplicial subcomplex $|W\cs_{\leq wW_{T}}|$ with a cell of type $T$.

We will write $\gS_{cc}$, when necessary, to denote the Davis complex equipped with this cellulation by Coxeter cells.  Under this cellulation, the $0$-cells of $\gS_{cc}$ correspond to cosets of $W_{\emptyset}$, i.e. to elements from $W$; and $1$-cells correspond to cosets of $W_{s}$, $s\in S$.  The features of this cellulation are summarized by the following, from \cite{davisbook}.

\begin{Proposition}\label{p:coxeter} There is a natural cell structure on $\gS$ so that 
\begin{itemize}
\item its vertex set is $W$, its 1-skeleton is the Cayley graph of $(W,S)$ and its 2-skeleton is a Cayley 2-complex.
\item each cell is a Coxeter cell.
\item the link of each vertex is isomorphic to $L$ (the nerve of $(W,S)$) and so if $L$ is a triangulation of $\BS^{n-1}$, $\gS$ is a topological $n$-manifold.
\item a subset of $W$ is the vertex set of a cell if and only if it is a spherical coset and
\item the poset of cells is $W\cs$.
\end{itemize}
\end{Proposition}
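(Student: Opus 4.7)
The plan is to verify the five bullet points directly from the construction of $\gS_{cc}$ given in the preceding paragraphs, where a $k$-cell of type $T\in\cs^{(k)}$ is the subcomplex $|W\cs_{\leq wW_T}|$ identified with the Coxeter cell of type $T$.

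First I would handle the low-dimensional skeleta together with the vertex/spherical-coset and poset statements. Since $W_\emptyset=\{e\}$, the $0$-cells are indexed by cosets $wW_\emptyset=\{w\}$, so the vertex set is $W$. For $s\in S$, the Coxeter cell of type $\{s\}$ is a $1$-simplex whose vertex set is $W_s=\{e,s\}$, so the $1$-cell indexed by $wW_s$ joins $w$ to $ws$; summing over $s$ gives precisely the Cayley graph of $(W,S)$. For a $2$-element spherical subset $\{s,t\}$, the canonical representation of $W_{\{s,t\}}$ is as a dihedral group of order $2m_{st}$, so its Coxeter cell is a $2m_{st}$-gon whose boundary word is $(st)^{m_{st}}$; these polygons are attached along the Cayley graph so as to kill exactly the relations in presentation (\ref{e:coxetergroup}), making the $2$-skeleton a Cayley $2$-complex. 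By construction, each cell is a Coxeter cell, its vertex set is a spherical coset $wW_T$, and the poset of cells is $W\cs$; conversely every spherical coset $wW_T$ arises as the vertex set of its cell of type $T$.

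Next I would identify $\Lk(w,\gS_{cc})$ with $L$. A cell contains the vertex $w$ iff its vertex set $vW_T$ contains $w$, i.e. $v^{-1}w\in W_T$; replacing $v$ by the minimal-length representative we may take $v=w$. Hence cells at $w$ are indexed by $T\in\cs$, and the cell of type $T$ contains a face of type $V$ meeting $w$ iff $V\subseteq T$. Therefore the poset of cells at $w$ other than $w$ itself is isomorphic to $\cs_{>\emptyset}$, whose geometric realization is $L$. Under this identification the link is exactly the nerve. If $L$ triangulates $\BS^{n-1}$, then every vertex link in $\gS_{cc}$ is an $(n-1)$-sphere; since $W$ acts transitively on vertices, every point of $\gS_{cc}$ has a Euclidean neighborhood (the open cone on its link), so $\gS_{cc}$ is a topological $n$-manifold.

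The main obstacle, modest as it is, is the link identification: one has to check that the combinatorial link assembled from the Coxeter-cell structure really does match the abstract simplicial structure on $\cs_{>\emptyset}$. This is handled by noting that the face poset of the Coxeter cell of type $T$ is naturally $W_T\cs_{\leq T}$, so intersecting with the stars of $w$ precisely picks out $\cs_{[\emptyset,T]}$; taking the union over all spherical $T\supseteq\emptyset$ with $w$ fixed recovers $\cs_{>\emptyset}=L$ with its simplex structure.
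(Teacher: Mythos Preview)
The paper does not prove this proposition at all; it is stated as a summary of the Coxeter-cell construction and attributed to \cite{davisbook}. Your direct verification from the definitions is the standard argument one finds in that reference, and the first four paragraphs are correct as written.

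One point worth tightening is the manifold conclusion. You only compute vertex links and then assert that every point has a Euclidean neighborhood. Strictly speaking, you also need that the link of an arbitrary point is a sphere. For a point in the relative interior of a cell $wW_T$ of type $T$, the link in $\gS_{cc}$ is the join $\BS^{|T|-1}*\Lk(\sigma_T,L)$, where $\sigma_T$ is the simplex of $L$ with vertex set $T$: this uses that Coxeter cells are simple polytopes (each vertex has exactly $|T|$ incident edges), so the normal link of the face $wW_T$ inside each ambient cell $wW_{T'}$, $T'\supseteq T$, is a simplex on $T'\setminus T$, and these assemble to $|\cs_{>T}|\cong\Lk(\sigma_T,L)$. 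If $L$ is a PL sphere these links are spheres and the join is $\BS^{n-1}$. In the dimensions the paper actually uses ($n\le 4$) every triangulation of $\BS^{n-1}$ is PL, so there is no further subtlety.
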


\noindent
\textbf{Ruins.}

The following subspaces are defined in \cite{ddjo}.  Let $(W,S)$ be a Coxeter system.  For any $U\subseteq S$, let $\cs(U)=\{T\in\cs|T\subset U\}$ and let $\gS(U)$ be the subcomplex of $\gS_{cc}$ consisting of all cells of type $T$, with $T\in\cs(U)$. 

Given $T\in \cs(U)$, define three subcomplexes of $\gS(U)$:
\begin{align}
\gO (U,T):\quad & \text{the union of closed cells of type $T'$, with $T'\in 
\cs(U)_{\geq T}$,}\notag\\
\wh{\gO}(U,T):\quad & \text{the union of closed cells of type $T''$}, 
T''\in \cs(U), T''\notin \cs (U)_{\geq T},\notag\\
\partial\gO (U,T):\quad & \text{the cells of $\gO (U,T)$ of type $T''$,
with } T''\notin \cs (U)_{\geq T}.\notag
\end{align}
The pair $(\gO (U,T),\partial\gO (U,T))$ is called the  \emph{$(U,T)$-ruin}.
For $T=\emptyset$, we have $\gO (U,\emptyset)=\gS (U)$ and
$\partial\gO (U,\emptyset)=\emptyset$.

\textbf{The subspace $\gO$.}  Let $t\in S$.  We call the $(S,t)$-ruin a \emph{one-letter ruin}.  Put $U:=\{s\in S\mid m_{st}<\infty\}$.  The path components of $\gO(S,t)$ are indexed by the cosets $W/W_{U}$.  Denote by $\gO$ the path-component of $\gO(S,t)$ with vertex set corresponding $W_{U}$.  The action of $W_{U}$ on $\gS$ restricts to an action on $\gO$.  Let $\partial\gO:=\gO\cap \partial\gO(S,t)$ and put $K(U):=K\cap\gO$.  Note that the $W_{U}$-translates of $K(U)$ cover $\gO$, i.e. $\gO=\bigcup_{w\in W_{U}}wK(U)$.

If we restrict our attention to cubes of type $T$, where $T\subseteq T'$ for some $T'\in\cs_{\geq t}$, $\gO$ is a cubical complex and $\partial\gO$ is a subcomplex.  Moreover, if $B$ is a component of $\partial\gO$, the space $D:=B\times\left[0,1\right]$ is isomorphic to the union of the $w$-translates of $K(U)$ where $w$ is a vertex of $B$.  We call such subspaces \emph{boundary collars}.  It is clear that the collection of boundary collars covers $\gO$.  We denote by $\partial_{in}(D)$ the ``1-end'' of this product and note that it is comprised of $0$-simplices corresponding to elements of $\cs_{\geq t}$.  The boundary collars intersect along these ``inner'' boundaries.  

\section{The $\ell^2$-homology of $\gO(S,t)$}

Here and for the remainder of the paper, we require that $(W,S)$ be an even Coxeter system whose nerve $L$ is a flag triangulation of $\BS^3$.  Fix $t\in S$ and let $U$, $\gO$ and $\partial\gO$ be defined as in \ref{s:coxeter}.  

Any $s\in U$ has the property that $m_{st}<\infty$.  Let $S':=\{s\in U\mid m_{st}>2\}$, and assume that $S'$ is not empty.  The group $W_{U}$ has the following properties.

\begin{Lem}\label{l:srel} Suppose that $L$ is flag.  Then for $s,s'\in S'$, either $s=s'$, or $m_{ss'}=\infty$.
\end{Lem}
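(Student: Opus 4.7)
My plan is to argue by contradiction using the flag condition on $L$ together with the classification of finite rank-$3$ Coxeter groups (restricted to the even case).

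Suppose distinct $s,s'\in S'$ satisfy $m_{ss'}\neq\infty$. Since $(W,S)$ is even, $m_{ss'}$ is then a finite even integer $\geq 2$. Because $s,s'\in S'\subseteq U$, we also have $m_{st},m_{s't}<\infty$. Consequently each of the pairs $\{s,t\}$, $\{s',t\}$ and $\{s,s'\}$ generates a finite (dihedral) subgroup, hence is a spherical subset; equivalently, each is an edge of the nerve $L$. Since $L$ is a flag complex, the triangle $\{s,s',t\}$ must itself span a simplex of $L$, i.e.\ $W_{\{s,s',t\}}$ must be a finite (spherical) subgroup.

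The obstacle, and the crux of the argument, is to see that this last conclusion is incompatible with the hypothesis that $(W,S)$ is even and $s,s'\in S'$. For this I appeal to the classification of finite rank-$3$ Coxeter groups: the irreducible types $A_3$, $B_3$, $H_3$ all have at least one odd off-diagonal entry ($3$ or $5$) in their Coxeter matrices, and the reducible possibilities $A_1\times I_2(p)$ (with $p$ necessarily even in our setting) and $A_1\times A_1\times A_1$ each have at least two off-diagonal entries equal to $2$. Hence in any even rank-$3$ Coxeter system whose associated group is finite, at least two of the three off-diagonal exponents $m_{ij}$ equal $2$. In our situation, however, $m_{st}>2$ and $m_{s't}>2$, so at most one of the three exponents $m_{st}, m_{s't}, m_{ss'}$ can equal $2$. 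This contradicts the sphericity of $\{s,s',t\}$ forced by flagness, and the lemma follows.

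Thus the only real work is the case analysis of the rank-$3$ finite Coxeter groups under the evenness constraint; the flag hypothesis then does the rest of the work for free. Note that the full strength of ``$L$ triangulates $\BS^3$'' is not needed here — only that $L$ is flag.
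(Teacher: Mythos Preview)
Your argument is correct and follows the same overall route as the paper: assume $m_{ss'}<\infty$, use the flag condition to force $\{s,s',t\}$ to be spherical, then derive a contradiction from evenness together with $m_{st},m_{s't}>2$. The only difference is in the final step: where you invoke the classification of finite rank-$3$ Coxeter groups, the paper uses the more elementary triangle-group inequality
\[
\frac{1}{m_{ss'}}+\frac{1}{m_{st}}+\frac{1}{m_{ts'}}\leq \frac{1}{2}+\frac{1}{4}+\frac{1}{4}=1,
\]
which directly contradicts sphericality without any case analysis.
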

\begin{proof} Suppose that $s\neq s'$ and that $m_{ss'}<\infty$.  Then $\{s,s'\}\in \cs$, and since $s,s'$ are both in $U$, the vertices corresponding to $s$, $s'$ and $t$ are pairwise connected in $L$. $L$ is a flag complex, so this implies that $\{s,s',t\} \in \cs$.  But
\[ \frac{1}{m_{ss'}}+\frac{1}{m_{st}}+\frac{1}{m_{ts'}}\leq \frac{1}{m_{ss'}}+\frac{1}{4}+\frac{1}{4}\leq 1.\]
This contradicts $\{s,s',t\}$ being a spherical subset.  So we must have that $m_{ss'}=\infty$.
\end{proof}

\begin{Cor}\label{c:commute} Let $s\in S'$ and let $T\in \cs_{\geq\{s,t\}}$.  Then $m_{ut}=m_{us}=2$ for $u\in T-\{s,t\}$.  In other words, the generators from $T-\{s,t\}$ commute with both $s$ and $t$.
\end{Cor}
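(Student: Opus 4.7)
The plan is to exploit the sphericality of $T$ together with the evenness hypothesis on $(W,S)$, and then appeal to Lemma \ref{l:srel} and the classical triangle-group inequality used in its proof. Since $T \in \cs$, every subset of $T$ is in $\cs$; in particular, for any $u \in T - \{s,t\}$, the set $\{s,t,u\}$ is spherical, which (as in the proof of Lemma \ref{l:srel}) is equivalent to
\[
\frac{1}{m_{su}} + \frac{1}{m_{ut}} + \frac{1}{m_{st}} > 1.
\]
Note also that since $s \in S'$ and $(W,S)$ is even, $m_{st}$ is a finite even integer greater than $2$, hence $m_{st} \geq 4$.

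To prove $m_{ut} = 2$, I would argue by contradiction. Since $\{u,t\} \in \cs$ and the system is even, $m_{ut}$ is a finite even integer, so if $m_{ut} \neq 2$ then $m_{ut} \geq 4$ and $u \in S'$. But then $s$ and $u$ are distinct elements of $S'$, and Lemma \ref{l:srel} forces $m_{su} = \infty$, contradicting $\{s,u\} \subseteq T \in \cs$. (Alternatively, one could bypass Lemma \ref{l:srel} and directly observe that $m_{ut} \geq 4$ together with $m_{st} \geq 4$ gives $\tfrac{1}{m_{su}} + \tfrac{1}{m_{ut}} + \tfrac{1}{m_{st}} \leq \tfrac{1}{2} + \tfrac{1}{4} + \tfrac{1}{4} = 1$, again contradicting sphericality.)

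To prove $m_{us} = 2$, I would use the triangle inequality directly, now that $m_{ut} = 2$ is in hand. Suppose for contradiction that $m_{us} > 2$; by evenness $m_{us} \geq 4$. Combined with $m_{st} \geq 4$ and $m_{ut} = 2$ this yields
\[
\frac{1}{m_{su}} + \frac{1}{m_{ut}} + \frac{1}{m_{st}} \leq \frac{1}{4} + \frac{1}{2} + \frac{1}{4} = 1,
\]
contradicting the sphericality of $\{s,u,t\}$. Hence $m_{us} = 2$, completing the proof. I do not anticipate any real obstacle: the corollary is a short deduction from Lemma \ref{l:srel} (or equivalently, from the classification of spherical Coxeter triangle groups) together with the parity constraint that forces every $m_{\cdot\cdot} > 2$ to be at least $4$.
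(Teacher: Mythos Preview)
Your argument is correct and is exactly the intended one: the paper states this as a corollary with no proof, the implicit reasoning being precisely the combination of Lemma~\ref{l:srel} (for $m_{ut}=2$) and the spherical triangle-group inequality from its proof (for $m_{us}=2$), together with the evenness constraint forcing any label $>2$ to be $\geq 4$.
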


Let $L_{st}$ denote the link in $L$ of the edge corresponding to the vertices $s$ and $t$.  The above Corollary states that the generators corresponding to the vertex set of $L_{st}$ commute with both $s$ and $t$.  Denote this set of generators by $U_{st}$.  

Of particular interest to us will be elements of $W_U$ with a reduced expression of the form $tst\cdots st$ for some $s\in S'$.  Since $W$ is even, this expression is unique, and we have the following Lemma.  

\begin{Lem}\label{l:XY-reduced} Let $s\in S'$ and let $u\in W_{\{s,t\}}$ be such that $u=tst\cdots st$, is a reduced expression beginning and ending with $t$.  Then $u$ is $(U-t,U-t)$-reduced.
\end{Lem}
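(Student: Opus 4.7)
The plan is to exploit that the word $u = tst\cdots st$, beginning and ending with $t$, is a palindrome, so $u = u^{-1}$. Consequently $\ell(ru) = \ell((ru)^{-1}) = \ell(ur)$ for every $r \in S$, and the problem reduces to proving $\ell(ur) > \ell(u)$ for all $r \in U - t$; the left-reducedness then comes for free. I would split this into two cases depending on whether $r = s$.

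For $r \in U - t$ with $r \neq s$ I would argue by contradiction. If $\ell(ur) < \ell(u)$, then the exchange condition \textbf{(E)} (applied to any reduced expression for $u$, using $r^2 = 1$) yields a reduced expression for $u$ ending in $r$, so in particular $r \in S(u)$. But the given reduced expression $tst\cdots st$ together with the well-definedness of $S(u)$ (Davis's book, Proposition 4.1.1, cited in the excerpt) forces $S(u) \subseteq \{s,t\}$, contradicting $r \neq s, t$.

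For $r = s$, a direct calculation inside the dihedral subgroup $W_{\{s,t\}}$ suffices. Because the reduced expression for $u$ alternates between $t$ and $s$ and both ends are $t$, the length $\ell(u)$ is odd; and because $(W,S)$ is even, $m_{st}$ is even. Therefore $\ell(u) \leq m_{st} - 1$, so the alternating word $tst\cdots sts$ of length $\ell(u) + 1 \leq m_{st}$ is a reduced expression in the dihedral group representing $us$, giving $\ell(us) = \ell(u) + 1 > \ell(u)$.

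I do not foresee a serious obstacle here: the proof is essentially bookkeeping with the exchange condition plus a length bound arising from the parity of $m_{st}$. The one Coxeter-theoretic fact I would invoke that is not stated explicitly in the excerpt is the equivalence $\ell(ur) < \ell(u) \iff u$ has a reduced expression ending in $r$; but this follows directly from \textbf{(E)} and $r^2 = 1$, as sketched above.
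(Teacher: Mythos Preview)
Your argument is correct. The paper does not supply a written proof of this lemma; it only offers the sentence preceding the statement --- ``Since $W$ is even, this expression is unique'' --- as implicit justification, the point being that an alternating $\{s,t\}$-word of odd length $<m_{st}$ admits no braid move and hence is the \emph{only} reduced expression for $u$, from which both $(U-t,\emptyset)$- and $(\emptyset,U-t)$-reducedness follow immediately. Your proof unpacks essentially the same content but organises it differently: the palindrome observation $u=u^{-1}$ lets you check only one side, the exchange condition handles $r\notin\{s,t\}$ via $S(u)=\{s,t\}$, and the parity mismatch $\ell(u)$ odd versus $m_{st}$ even handles $r=s$. Both routes rest on the same two facts (well-definedness of $S(u)$ and evenness of $m_{st}$); yours is simply a more explicit write-up of what the paper leaves to the reader.
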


\begin{Lem}\label{l:T-hom}  Let $V,T\subset S$ and consider the function $g_{VT}:W_{V}\rightarrow W_{T}$ induced by the following rule: $g_{VT}(s)=s$ if $s\in V\cap T$ and $g_{VT}(s)=e$ (the identity element of $W$) for $s\in V-T$.  $g_{VT}$ is a homomorphism.
\end{Lem}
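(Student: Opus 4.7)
The plan is to verify that the prescribed rule on generators respects every defining relator of the Coxeter presentation of $W_V$, so that by the universal property of a group presentation it extends uniquely to a group homomorphism $W_V\to W_T$ (and the extension is automatic once well-definedness is in hand). The presentation of $W_V$ has two families of relators: the involutions $s^2$ for each $s\in V$, and the braid relators $(st)^{m_{st}}$ for each pair $s,t\in V$ with $m_{st}<\infty$.

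First I would dispense with the involutions. If $s\in V\cap T$, then $g_{VT}(s)^2=s^2=1$ in $W_T$; if $s\in V-T$, then $g_{VT}(s)^2=e^2=1$. So relators of the first type cause no trouble, independent of parity hypotheses.

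For the braid relator $(st)^{m_{st}}$ with $m_{st}<\infty$, I would split into three cases by the membership of $s,t$ in $T$. If both $s,t\in V\cap T$, then the image is $(st)^{m_{st}}$, which is already a relator of $W_T$. If both $s,t\in V-T$, then the image is $e^{m_{st}}=1$. The delicate case is the asymmetric one: say $s\in V\cap T$ and $t\in V-T$. The image in $W_T$ is then $(s\cdot e)^{m_{st}}=s^{m_{st}}$, which is trivial in $W_T$ iff $m_{st}$ is even. This is precisely where the standing hypothesis that $(W,S)$ is even enters: since $m_{st}<\infty$, evenness forces $m_{st}$ to be even, so $s^{m_{st}}=1$.

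I do not anticipate any serious obstacle; the only subtlety is the asymmetric case, and without the even hypothesis the map genuinely would fail to be well-defined (e.g.\ if $m_{st}$ were odd, then $sts\cdots st$ and $tst\cdots ts$ would represent the same element of $W_V$ but have different images under the extension). As a side benefit, specializing to $T=\{s\}\subseteq V$ yields a homomorphism $W_V\to W_{\{s\}}\cong\BZ/2$ which records the number of occurrences of $s$ modulo $2$ in any expression for a given element, which is the well-definedness of parity alluded to in the discussion preceding the exchange condition.
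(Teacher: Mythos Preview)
Your proof is correct and follows essentially the same approach as the paper: both verify that the map on generators respects the Coxeter relators by a case analysis on which of the two generators lie in $T$, invoking the standing evenness hypothesis in the asymmetric case to conclude $s^{m_{st}}=1$. Your write-up is slightly more explicit in separating out the involution relators and in naming the universal property of the presentation, but the argument is the same.
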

\begin{proof}  We show that $g_{VT}$ respects the relations in $W_{V}$.  Let $s,u\in V$ be such that $(su)^m=1$.  Then 
\begin{equation*}
	g_{VT}((su)^m)=
	\begin{cases}
		(su)^m & \text{ if } s\in T, u\in T\\
		s^m & \text{ if } s\in T, u\notin T\\
		u^m & \text{ if } u\in T, s\notin T\\
		e & \text{ if } s\notin T, u\notin T.
	\end{cases}
\end{equation*}
In all cases, since $(W_V,V)$ is even, $g_{T}((su)^m)=e$.  
\end{proof}

Then with $T\in \cs_{\geq t} $ and $U$ as above, we define an action of $W_U$ on the set of cosets $W_T/W_{T-t}$: For $w\in W_{U}$ and $v\in W_{T}$, define 
\begin{equation}\label{e:action}
	w\cdot vW_{T-t}=g_{UT}(w)vW_{T-t}.
\end{equation}

\noindent
\textbf{Coloring boundary collars.}

Set
\begin{equation*}
	A=\prod_{T\in\cs_{\geq t}}W_{T}/W_{T-t}.
\end{equation*}
We call $A$ the set of colors, note that it is a finite set.  The action defined in (\ref{e:action}) extends to a diagonal $W_{U}$-action on $A$.  So for $w\in W_{U}$ and $a\in A$, write $w\cdot a$ to denote $w$ acting on $a$.  Let $\bar{e}$ be the element of $A$ defined by taking the trivial coset $W_{T-t}$ for each $T\in \cs_{\geq t}$.  Vertices of $\gO$ correspond to group elements of $W_{U}$, so we paint the vertices of $\gO$ by defining a map $c:W_{U}\rightarrow A$ with the rule $c(w):=w\cdot\bar{e}$.

\begin{Remark}\label{r:trivial} If an element $w\in W_{U}$ contains no $t$'s in any of its reduced expressions, then $w$ acts trivially on the element $\bar{e}$, i.e. $w\cdot\bar{e}=\bar{e}$.
\end{Remark}  

We will paint the space $wK(U)$ with $c(w)$.  In this way, all of $\gO$ is colored by some element of $A$.  For vertices $w$ and $w'$ of the same component $B$ of $\partial\gO$, $h=w^{-1}w'\in W_{U-t}$.  So $c(w')=c(wh)=wh\cdot\bar{e}=w\cdot\bar{e}=c(w)$, and therefore all of $D=B\times\left[0,1\right]$ is painted with $c(w)$.  Note that each component of $\partial\gO$ is monochromatic while $\partial_{in}(D)$ is not.

\begin{Lem}\label{l:samecolordisjoint} Let $D=B\times\left[0,1\right]$ and $D'=B'\times\left[0,1\right]$ be boundary collars where $B$ and $B'$ are different components of $\partial\gO$. Suppose that the vertices of $B$ and $B'$ have the same color.  Then $D\cap D'=\emptyset$.
\end{Lem}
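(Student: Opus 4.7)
The plan is to argue by contradiction: using the color hypothesis $c(w)=c(w')$ for any $w\in B$ and $w'\in B'$, I will show that $w$ and $w'$ must lie in the same component of $\partial\gO$, contradicting $B\neq B'$.

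Suppose $D\cap D'\neq\emptyset$. Since distinct boundary collars are glued only along their inner boundaries, the intersection contains a $0$-simplex of $\partial_{in}(D)\cap\partial_{in}(D')$, and any such $0$-simplex is a spherical coset $vW_T$ with $T\in\cs_{\geq t}$; since $t\in T$ and $T$ is spherical, $T\subseteq U$. For $vW_T$ to appear as a vertex of $D$ there must exist a vertex $w$ of $B$ with $w\in vW_T$, and similarly a vertex $w'$ of $B'$ with $w'\in vW_T$. Setting $h:=w^{-1}w'\in W_T\subseteq W_U$, the color equality translates to $h\cdot\bar e=\bar e$; reading off the $T$-coordinate of this identity in $A=\prod_{T'\in\cs_{\geq t}}W_{T'}/W_{T'-t}$ yields $g_{UT}(h)\in W_{T-t}$. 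But $T\subseteq U$, so the homomorphism $g_{UT}$ of Lemma \ref{l:T-hom} restricts to the identity on $W_T$; in particular $g_{UT}(h)=h$, and therefore $h\in W_{T-t}\subseteq W_{U-t}$.

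Finally, choose a reduced expression $h=s_1s_2\cdots s_k$ with each $s_i\in U-t$. The edge-path $w,\, ws_1,\, ws_1s_2,\ldots, wh=w'$ traverses $1$-cells of spherical type $\{s_i\}\subseteq U-t$, all of which lie in $\partial\gO$, so $w$ and $w'$ lie in the same component of $\partial\gO$, the required contradiction. The only delicate step is the opening one: using the fact that boundary collars meet only along their inner boundaries in order to locate a shared inner-boundary $0$-simplex $vW_T$ with $T\ni t$. Once this is in place, the algebra with $g_{UT}$ and Lemma \ref{l:T-hom} is routine.
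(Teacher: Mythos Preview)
Your proof is correct and follows essentially the same route as the paper's. Both arguments locate vertices $w\in B$, $w'\in B'$ whose chambers meet, translate the color hypothesis into $w^{-1}w'\cdot\bar e=\bar e$, and then read off a single well-chosen coordinate of $A$ to force $w^{-1}w'\in W_{U-t}$, contradicting $B\neq B'$. The only cosmetic difference is that you invoke the previously stated fact that distinct collars meet along inner boundaries to obtain a shared $0$-simplex of type $T\ni t$ and then use the $T$-coordinate, whereas the paper sets $V=S(w^{-1}w')$, observes $V\cup t$ is spherical, and uses the $(V\cup t)$-coordinate; since $V\cup t\subseteq T$, these yield the same conclusion.
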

\begin{proof} Suppose, by way of contradiction, that $D\cap D'\neq\emptyset$, i.e. there exist vertices $w\in B$, $w'\in B'$ such that $c(w)=c(w')$ and $wK(U)\cap w'K(U)\neq\emptyset$.  Let $V=S(w^{-1}w')$ and $v=w^{-1}w'$.  Then $c(w)=c(w')\Rightarrow w\cdot\bar{e}=wv\cdot\bar{e}\Rightarrow \bar{e}=v\cdot\bar{e}$. Thus, for any $T\in \cs_{\geq t}$, we have that 
\begin{equation}\label{e:h}
	v\cdot W_{T-t}=W_{T-t}.
\end{equation}
$V\cup t$ is spherical, and since $v\in W_{V}$, the action of $v$ on $W_{V\cup t}/W_{V-t}$ defined in (\ref{e:action}) is left multiplication by $v$.  So by (\ref{e:h}), we have that $v\in W_{V-t}$.  But this contradicts $w$ and $w'$ coming from different components of $\partial\gO$.
\end{proof}

Then for $c\in A$, define the \emph{$c$-collars}, $F_{c}$, to be the disjoint union of the boundary collars $D=B\times\left[0,1\right]$ where each component $B$ of $\partial\gO$ has the color $c$.  We refer to these collections as \emph{colors}.  The collection of colors is a finite cover of $\gO$.  

\noindent
\textbf{Even and odd colors.}  

Let $T=\{t\}$ and consider the homomorphism $g_{UT}:W_{U}\rightarrow W_{t}$ defined in (\ref{l:T-hom}).  Under $g_{UT}$, an element $w\in W_{U}$ is sent to the identity in $W_{t}$ if $w$ has an even number of $t$'s present in some  factorization (and therefore, all factorizations) as a product of generators from $U$ and an element $w\in W_U$ is sent to $t\in W_{t}$ if $w$ has an odd number of $t$'s present in some factorization.  Thus, we call a vertex $w$ \emph{even} if $g_{UT}(w)=e$; \emph{odd} if $g_{UT}(w)=t$.  If two vertices $w$ and $w'$ are such that $c(w)=c(w')$, then clearly $g_{UT}(w)=g_{UT}(w')$.  So we may also classify the colors (both the elements of $A$ and the collections of boundary collars), as even or odd.  We will suppress the subscript $c$ and say a color $F$ is even or odd.  

Of fundamental importance will be how these colors intersect.  By Remark \ref{r:trivial}, we know that in order for the vertices of a Coxeter cell to support two different colors, this cell must be of type $T\in\cs_{\geq t}$.  But, for a cell to support two different \emph{even} vertices, $v$ and $v'$, this cell must be of type $T\in\cs_{\geq\{s,t\}}$ for exactly one $s\in S'$ (uniqueness is given by Corollary \ref{c:commute}).  Moreover, $w=v^{-1}v'$ has the properties that $\{s,t\}\subseteq S(w)$ and that it contains at least two, and an even number of $t$'s in any factorization as a product of generators.  We call such $w$ \emph{$t$-even}.  

\begin{Example}\label{ex:dim2} The following example is representative of our situation.  Suppose $L=\BS^1$, and $U=\{t,r,s\mid (rt)^2=1, (st)^4=1\}$.  $\gO$ is represented in Figure \ref{fig:ruincolors9}.  The black dots represent the vertices of the Coxeter cellulation, with the vertices $e$ and $tst$ labeled.  The even colors are shaded.  Even boundary collars intersect in a $0$-simplex corresponding to the spherical subset $\{s,t\}$.  The intersection of one odd color and all evens is the inner boundary of the odd color.  
\end{Example} 
\begin{figure}[htbp]
	\centering
		\includegraphics[width=8cm]{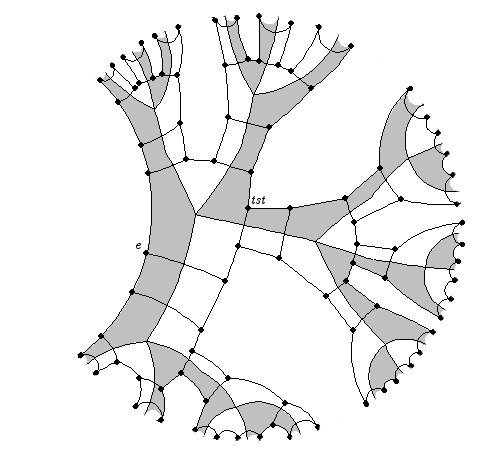}
	\caption{Even and Odd Colors of $\gO$}
	\label{fig:ruincolors9}
\end{figure}

\textbf{The intersection of even colors.}  Let $D_0$ denote the boundary collar containing the vertex $e$.  Fix $s\in S'$ and let $D_2$ denote the boundary collar containing the vertex $u$, where $u\in W_{\{s,t\}}$ is $t$-even and has a reduced expression ending in $t$.  We study $D_0\cap D_2$.  

\begin{Lem}\label{l:W'-orbit} Let $W':=W_{U_{st}}$ and let $K'=K(U)\cap uK(U)$.  Denote by $W'K'$ the orbit of $K'$ under $W'$.  $D_{0}\cap D_{2} = W'K'$.
\end{Lem}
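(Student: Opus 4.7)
The plan is to establish the two inclusions separately. For the easy inclusion $W'K'\subseteq D_{0}\cap D_{2}$, I observe that every element of $U_{st}$ commutes with both $s$ and $t$, so any $r\in W'=W_{U_{st}}$ commutes with the word $u\in W_{\{s,t\}}$. Because $U_{st}\subseteq U-t$, the element $r$ is a vertex of $B_{0}$, giving $rK(U)\subseteq D_{0}$; and $ru=ur\in uW_{U-t}$ is a vertex of $B_{2}$, giving $ruK(U)\subseteq D_{2}$. Thus $rK'=rK(U)\cap ruK(U)\subseteq D_{0}\cap D_{2}$.

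For the reverse inclusion, take $x\in D_{0}\cap D_{2}$ and fix $v\in W_{U-t}$, $h\in W_{U-t}$ with $x\in vK(U)\cap uhK(U)$. Set $g:=v^{-1}uh$ and $V:=S(g)$. Non-emptiness of the intersection forces $V\in\cs$, and Lemma \ref{l:T-hom} shows that the $t$-parity of $g$ equals the (even, positive) $t$-parity of $u$, so $t\in V$. Each simplex of $vK(U)\cap uhK(U)$ containing $x$ has the form $vW_{T_{1}}\subset\cdots\subset vW_{T_{k}}$ with $V\subseteq T_{1}$ and $T_{k}\cup\{t\}\in\cs$. Such a simplex lies in $rK(U)\cap ruK(U)$ precisely when $u\in W_{T_{1}}$ and $r^{-1}v\in W_{T_{1}}$; the first condition requires $\{s,t\}\subseteq T_{1}$, which by Corollary \ref{c:commute} forces $T_{1}\subseteq U_{st}\cup\{s,t\}$, and then the parabolic intersection identity $W_{A}\cap W_{B}=W_{A\cap B}$ reduces the existence of the desired $r\in W_{U_{st}}$ to the single structural claim $v\in W_{U_{st}\cup\{s\}}$.

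To establish this claim I apply Lemma \ref{l:reduction} to the equation $uh=vg$. Using the base case $u=tst$ (the general case $u=t(st)^{j}$ with $j$ odd following by iteration), I rearrange $tst\cdot h=v\cdot g$ into the form $tst\cdot w'=v\cdot t\cdot v_{\mathrm{lem}}$ demanded by the lemma, where $w'$ absorbs the tail of $uh$ and $v_{\mathrm{lem}}\in W_{U_{st}\cup\{s,t\}}$ (this inclusion coming from $g\in W_{T_{1}}\subseteq W_{U_{st}\cup\{s,t\}}$). Lemma \ref{l:reduction} then yields $S(v)\subseteq U_{st}\cup\{s\}$, and decomposing $v=r\cdot w_{1}$ with $r\in W_{U_{st}}$ and $w_{1}\in W_{\{s\}}\subseteq W_{T_{1}}$ produces the required element of $W'$, placing $x$ in $rK'\subseteq W'K'$.

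The main obstacle I foresee is twofold. First, the generator $s$ need not lie in $V=S(g)$ automatically, because the $s$-parities of $v$ and $h$ may cancel the $s$-contributions from $u$; to enforce $\{s,t\}\subseteq T_{1}$ one may have to enlarge the minimal spherical $T_{1}\supseteq V$ to include $s$, which requires verifying that $T_{1}\cup\{s\}$ remains spherical, and should follow from the flag hypothesis on $L$ together with the fact that $T_{1}$ is pairwise joined to $s$ by edges of $L$. Second, bringing $uh=vg$ into the reduced factored form $v\cdot t\cdot v_{\mathrm{lem}}$ demanded by Lemma \ref{l:reduction} is delicate: one must ensure $g$ admits a reduced expression beginning with $t$ (not automatic from $t\in S(g)$ alone), which may force replacing $v$ by another representative of the component $B_{0}$, or inducting on $\ell(u)$.
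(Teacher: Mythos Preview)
Your overall strategy matches the paper's: prove both inclusions, the easy one via commutation of $W'$ with $u$, and the hard one by applying Lemma~\ref{l:reduction} to conclude $S(v)\subseteq U_{st}\cup\{s\}$.  The gap is precisely the first obstacle you flagged, and your proposed repair does not work.

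You know $V=S(g)$ is spherical, $t\in V$, and (since $g$ is $t$-even in a spherical parabolic) that $V$ contains exactly one element $s'\in S'$.  What you have \emph{not} shown is that $s'=s$.  Your fix is to enlarge $T_{1}\supseteq V$ to $T_{1}\cup\{s\}$ via flagness, claiming $T_{1}$ is pairwise joined to $s$ in $L$.  But if $s'\neq s$, then $s'\in V\subseteq T_{1}$ and by Lemma~\ref{l:srel} we have $m_{ss'}=\infty$; there is no edge from $s$ to $s'$, so $T_{1}\cup\{s\}$ cannot be spherical.  This also breaks your application of Lemma~\ref{l:reduction}: you asserted $g\in W_{T_{1}}\subseteq W_{U_{st}\cup\{s,t\}}$, but that containment came from Corollary~\ref{c:commute} applied with the pair $\{s,t\}$, which already presupposes $\{s,t\}\subseteq T_{1}$.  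The argument is circular at this point.

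The paper closes this gap with a color computation (its Claim~1).  One compares the action of $u$ and of $vg$ on the $\{s,t\}$-coordinate of $\bar e$, i.e.\ on $W_{\{s,t\}}/W_{s}$.  Since $u\in W_{\{s,t\}}$ is $t$-even and nontrivial, $u\cdot W_{s}=uW_{s}\neq W_{s}$.  If $s\notin S(g)$, then $g_{U,\{s,t\}}(vg)$ is a power of $t$ of even exponent (because $v\in W_{U-t}$ and $g$ is $t$-even), hence acts trivially on $W_{s}$; this contradicts $c(u)=c(uh)=c(vg)$.  Thus $s\in V$, whence $s'=s$, and now Corollary~\ref{c:commute} legitimately gives $V\subseteq U_{st}\cup\{s,t\}$ and $\sigma'\in K'$.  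With this in hand your remaining steps (possibly replacing $u$ by $su$ so that $u$ begins with $tst$, then invoking Lemma~\ref{l:reduction} once---no iteration is needed---and finally splitting $v=qs^{\varepsilon}$ with $q\in W'$) go through exactly as in the paper.
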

\begin{proof} For any $w\in W'$, the vertex $w$ is in the same component of $\partial\gO$ as $e$, and therefore $wK(U)\subset D_0$.  $wu=uw$, so $wu$ is in the same component of $\partial\gO$ as $u$ and $wuK(U)\subset D_{2}$.  Thus $wK'=wK(U)\cap wuK(U)\subset D_{0}\cap D_{2}$.

Now let $\gs$ be a $0$-simplex in $D_{0}\cap D_{2}$.  Then there exist $w,w'\in W_{U-t}$ such that $\gs\in wK(U)\cap uw'K(U)$, i.e. $\gs$ is simultaneously the $w$- and $uw'$-translate of a $0$-simplex $\gs'$ in $K(U)$.  Let $V$ be the spherical subset to which $\gs'$ corresponds and let $v\in W_V$ be such that $uw'=wv$.  $c(e)=c(w)$ and $c(u)=c(uw')$, so $w$ and $uw'$ are differently colored even vertices of a cell of type $V$.  Thus $\{s',t\}\subseteq S(v)\subseteq V$ for exactly one $s'\in S'$ and $v$ is $t$-even.  

\textbf{Claim 1}: $s'=s$.\\  
\textbf{Pf of Claim 1}: Since $w'\in W_{U-t}$, $c(u)=c(uw')=c(wv)$, i.e. $u$ and $wv$ act the same on every coordinate of $\bar{e}$.  Consider the $\{s,t\}$-coordinate.  $u\in W_{\{s,t\}}$ is $t$-even, so $u\cdot W_{s}=uW_{s}$ and $uW_{s}\neq W_{s}$.  But if $s\notin S(v)$, then $v$ being $t$-even and $w\in W_{U-t}$ imply that $wv\cdot W_{s}=W_{s}$; which contradicts $u$ and $wv$ having the same color.  So \textbf{Claim 1} is true, and as a result $V\in\cs_{\geq\{s,t\}}$ and $\gs'\in K'$.  It remains to show that $\gs$ is in the $W'$-orbit of $K'$.  

\textbf{Claim 2}: $S(w)\subseteq (U_{st}\cup \{s\})$.\\
\textbf{Pf of Claim 2}: Take a reduced expression for $u$ which ends in $t$.  If this expression begins with $s$, multiply $u$ on the left by $s$, so that we have $suw'=swv$.  The only change this can effect on $S(w)$ is by either adding or subtracting an $s$, which is inconsequential to our claim.  So, we may assume that $u$ has a reduced expression of the form $tst\cdots st$ as described in Lemma \ref{l:XY-reduced}.  Hence, $u$ is $(U-t,U-t)$-reduced and $uw'$ has a reduced expression beginning with the subword $tst$.  $v$ is $t$-even, so $wv$ has a reduced expression of the form $wtv'$ where $w\in W_{U-t}$ and $S(v')\subset U_{st}\cup\{s,t\}$.  \textbf{Claim 2} then follows from Lemma \ref{l:reduction}.  

We now finish the proof of \ref{l:W'-orbit}.  If $s\notin S(w)$, then $w\in W'$ and we are done since $\gs$ is the $w$-translate of $\gs'$.  If $s\in S(w)$, then $w$ may be written as $qs$, with $q\in W'$ and since $s\in V$, $qsW_{V}=qW_{V}$.  So $\gs$ is also the $q$-translate of $\gs'$.
\end{proof}

\begin{Proposition}\label{p:W'-Daviscpx} $(D_{0}\cap D_{2})\cong\gS(W',U_{st})$, an infinite connected $2$-manifold.
\end{Proposition}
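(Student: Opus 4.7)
The plan is to construct a $W'$-equivariant simplicial isomorphism between $\gS(W',U_{st})$ and $D_0\cap D_2$, then extract the manifold, connectedness, and infiniteness statements from properties of the nerve $L_{st}$. By Lemma \ref{l:W'-orbit}, $D_0\cap D_2=W'K'$, so it suffices to identify $K'$ with the fundamental chamber $|\cs(U_{st})|$ of $\gS(W',U_{st})$ in a way compatible with the $W'$-actions on both sides.

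For the first step, I would work in the simplicial structure on $\gS$. Here the intersection $K\cap uK$ is simplicially isomorphic to $|\cs_{\geq\{s,t\}}|$, since $S(u)=\{s,t\}$. Every $T\in\cs_{\geq\{s,t\}}$ contains $t$ and, by Corollary \ref{c:commute}, satisfies $T-\{s,t\}\subseteq U_{st}$, so $K\cap uK$ already lies inside $\gO$, giving $K'=K\cap uK\cap\gO=K\cap uK$. The assignment $T\mapsto T-\{s,t\}$ is then an order-preserving bijection $\cs_{\geq\{s,t\}}\to\cs(U_{st})$; its inverse $T'\mapsto T'\cup\{s,t\}$ is well-defined because elements of $U_{st}$ commute with both $s$ and $t$, so $W_{T'\cup\{s,t\}}\cong W_{\{s,t\}}\times W_{T'}$ is finite exactly when $W_{T'}$ is. Passing to geometric realizations yields a simplicial isomorphism $\iota:K'\to|\cs(U_{st})|$.

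Next, I would extend $\iota$ to a $W'$-equivariant simplicial map $\phi:D_0\cap D_2\to\gS(W',U_{st})$ by the rule $\phi(w\cdot y):=w\cdot\iota(y)$ for $w\in W'$ and $y\in K'$. Well-definedness reduces to matching stabilizers: a simplex in $K'$ whose top coset is $W_T$ with $T\in\cs_{\geq\{s,t\}}$ has $W$-stabilizer $W_T$, hence $W'$-stabilizer $W_{U_{st}}\cap W_T=W_{U_{st}\cap T}=W_{T-\{s,t\}}$, using the standard parabolic-intersection identity $W_A\cap W_B=W_{A\cap B}$. This exactly matches the stabilizer of $\iota(\text{simplex})$ in $\gS(W',U_{st})$, so $\phi$ is a well-defined $W'$-equivariant simplicial isomorphism.

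Finally, the nerve of $(W',U_{st})$ is precisely $L_{st}$, the link in $L$ of the edge $\{s,t\}$; since $L$ is a flag triangulation of $\BS^3$, $L_{st}$ is a flag triangulation of $\BS^1$, hence a cycle of length $\geq 4$. By Proposition \ref{p:coxeter}, $\gS(W',U_{st})$ is a topological $2$-manifold; it is connected because every Davis complex is contractible; and it is infinite because a flag-complex argument rules out $U_{st}$ being a simplex of $L$ (otherwise $\{s,t\}\cup U_{st}$ would be a simplex of $L$, contradicting the structure of $L_{st}$), so $W_{U_{st}}$ is not finite. The principal technical subtlety is the initial step: verifying that $K'$ coincides with all of $K\cap uK$ rather than a proper subspace, and this is exactly what Corollary \ref{c:commute} affords.
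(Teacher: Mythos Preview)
Your argument is correct and follows essentially the same route as the paper: identify $K'$ with $|\cs_{\geq\{s,t\}}|$ using $S(u)=\{s,t\}$, use the bijection $T\mapsto T-\{s,t\}$ (justified by Corollary~\ref{c:commute}) to identify this with $|\cs(U_{st})|$, extend $W'$-equivariantly to obtain $\gS(W',U_{st})$, and then read off the manifold statement from the fact that the nerve $L_{st}$ triangulates $\BS^1$. You simply supply more detail than the paper does---the explicit check that $K'=K\cap uK$, the stabilizer computation for equivariance, and the separate verification of connectedness and infiniteness---but the underlying strategy is identical.
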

\begin{proof} Since $S(u)=\{s,t\}$, $K'$ is the geometric realization of the poset 
$\cs_{\geq\{s,t\}}=\{V\in \cs|\{s,t\}\subseteq V\}.$  By Lemma \ref{l:W'-orbit}, $(D_{0}\cap D_{2})\cong |W'\cs_{\geq\{s,t\}}|$, and by Corollary \ref{c:commute}, $\cs_{\geq\{s,t\}}$ is isomorphic to $\cs(U_{st})$ via the map $T\rightarrow T-\{s,t\}$.  So $(D_{0}\cap D_{2})\cong |W'\cs(U_{st})|=\gS(W',U_{st})$. 

Simplices in $L_{st}$ correspond to spherical subsets $T\in\cs$ such that neither $s$ nor $t$ is contained in $T$ but $T\cup\{s,t\}\in \cs$.  So by Corollary \ref{c:commute}, the vertex set of a simplex of $L_{st}$ corresponds to a spherical subset of $\cs(U_{st})$.  Conversely, given a spherical subset $T\in\cs(U_{st})$, $W_{T\cup\{s,t\}}=W_{T}\times W_{\{s,t\}}$, which is finite.  So $T$ corresponds to a simplex of $L_{st}$.  Thus, $L_{st}$ is the nerve of the system $(W',U_{st})$.  Since $L$ triangulates $\BS^3$, $L_{st}$ triangulates $\BS^1$.  The result follows from Proposition \ref{p:coxeter}.
\end{proof} 

\begin{Cor}\label{c:H22evens} Let $F\neq F'$ be even colors.  Then $\cH_{2}(F\cap F')=0$.
\end{Cor}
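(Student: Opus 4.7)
The plan is to exhibit $F\cap F'$ as a disjoint union of intersections of single boundary collars, reduce each non-empty such intersection by a $W_U$-translation to the model $D_0\cap D_2$ of Lemma \ref{l:W'-orbit}, and then invoke Proposition \ref{p:W'-Daviscpx} together with the vanishing of top-dimensional $\ltwo$-homology for non-compact surfaces.

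First, by Lemma \ref{l:samecolordisjoint} each of $F$ and $F'$ is a disjoint union of boundary collars, so
\[
F\cap F' \;=\; \bigsqcup_{D\subset F,\;D'\subset F'}(D\cap D'),
\]
and it suffices to prove $\cH_2(D\cap D')=0$ for every non-empty summand. Fix such a pair $D,D'$, pick a vertex $w$ of $D$, and act on $\gO$ by $w^{-1}$; this carries $D$ to the boundary collar $D_0$ containing $e$ and $D'$ to a collar $D''$ of some even color different from $\bar{e}$, without affecting any $\ltwo$-invariant. The nonemptiness of $D_0\cap D''$ furnishes a vertex $v$ of $D''$ with $V:=S(v)\in\cs$, $v$ even, and $c(v)\neq\bar{e}$.

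The next step will be to show that $D''$ is exactly a collar of the type treated in Lemma \ref{l:W'-orbit}. Since $c(v)\neq\bar{e}$, Remark \ref{r:trivial} forces $t\in V$; inspecting the coordinates of the coloring $A=\prod_{T\in\cs_{\geq t}}W_T/W_{T-t}$ and using that $v$ is $t$-even, I will conclude that $V\in\cs_{\geq\{s,t\}}$ for some $s\in S'$, and that this $s$ is unique by Corollary \ref{c:commute}. That same corollary lets me factor $v=v_1v_2$ with $v_1\in W_{V-\{s,t\}}\subset W_{U-t}$ and $v_2\in W_{\{s,t\}}$. Because vertices lying in a common component of $\partial\gO$ differ by elements of $W_{U-t}$, the collar $D''$ depends only on the coset $vW_{U-t}=v_2W_{U-t}$, so I may replace $v$ by $v_2$; multiplying on the right by $s$ if necessary produces a reduced, $t$-even representative $u\in W_{\{s,t\}}$ starting and ending with $t$, as in Lemma \ref{l:XY-reduced}. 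At that point $D''$ coincides with the collar $D_2$ of Lemma \ref{l:W'-orbit}, and Proposition \ref{p:W'-Daviscpx} applies directly to give $D\cap D'\cong \gS(W_{U_{st}},U_{st})$, an infinite connected $2$-manifold. For such a manifold the reduced $\ltwo$-homology vanishes in the top dimension by the elementary argument already recalled in the introduction: a top-dimensional cycle is locally constant on each component, and the only square-summable constant on an infinite component is zero. Hence $\cH_2(D\cap D')=0$, and the result follows by summing over the disjoint decomposition.

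The step I expect to be the main obstacle is the normalization in the previous paragraph, i.e.\ the careful choice of $s\in S'$ and of the representative $u$ showing that an arbitrary non-empty intersection of differently-even-colored collars is genuinely a $W_U$-translate of the model $D_0\cap D_2$; once this geometric bookkeeping is in place, the $\ltwo$-homology input is routine.
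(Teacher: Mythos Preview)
Your proof is correct and follows essentially the same route as the paper: decompose $F\cap F'$ into intersections of single boundary collars, translate so that one collar is $D_0$, normalize the representative vertex via the factorization $W_V\cong W_{V-\{s,t\}}\times W_{\{s,t\}}$ to identify the other collar with the $D_2$ of Lemma~\ref{l:W'-orbit}, and then apply Proposition~\ref{p:W'-Daviscpx} together with the standard vanishing of top-dimensional $\ltwo$-homology on an infinite surface. Two cosmetic fixes: you should choose the vertex $w\in D$ so that $wK(U)$ actually meets $D'$ (otherwise $V=S(v)$ need not be spherical), and Lemma~\ref{l:W'-orbit} only requires $u$ to \emph{end} in $t$---right-multiplication by $s$ cannot in general also make it \emph{start} with $t$, but this is not needed.
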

\begin{proof} Suppose that $F\neq F'$ are both even colors such that $F\cap F'\neq\emptyset$.  Then there exist even vertices $v$ and $v'$ with $vK(U)\cap v'K(U)\neq\emptyset$.  Let $w=v^{-1}v'$ and put $T=S(v^{-1}v')$.  $T$ is a spherical subset, and $v$ and $v'$ are both vertices of a cell of type $T$.  So we have exactly one $s\in S'$ with $\{s,t\}\subset T$.  Factor $w$ as $w=xq$ where $x\in W_{\{s,t\}}$ is $t$-even and $q\in W_{T-\{s,t\}}$.  Now, $x$ may not have a reduced expression ending in $t$.  If it does not, then $xs$ does and it is in the same boundary collar as $x$ and $w$.  So let 
\[
u=
\begin{cases}
	x & \text{ if $x$ has a reduced expression ending in $t$},\\
	xs & \text{ otherwise}.
\end{cases}
\]
Then $vK(U)\cap v'(U)\subseteq vK(U)\cap vuK(U)$.  Act on the left by $v^{-1}$ and we are in the situation studied in \ref{l:W'-orbit} and \ref{p:W'-Daviscpx}.  So $F\cap F'$ is the disjoint union of infinite connected $2$-manifolds.  As a result, any $2$-cycle must be constant $0$. 
\end{proof}

\begin{Remark}\label{r:oneevencolor} If $S'=\{s\in S| 2<m_{st}<\infty\}=\emptyset$, then $W_U=W_{U-t}\times W_t$ and there is one even color and one odd color.  
\end{Remark}

\textbf{Multiple even colors.}  Suppose that $D_{1}, D_{2},\ldots,D_{n}, D_{e}$ are even boundary collars.  Then 
\begin{equation*}%\label{e:multipleevens}
D_{e}\cap (\bigcup^{n}_{j=1} D_{j})= (D_{e}\cap D_{1})\cup\cdots\cup (D_{e}\cap D_{n}),
\end{equation*}
and suppose that for some $1\leq i< k\leq n$ we have that $(D_{e}\cap D_{i})\cup (D_{e}\cap D_{k})$ is not disjoint.  Let $\gs$ be a $0$-simplex contained in $D_{e}\cap D_{i}\cap D_{k}$ corresponding to a coset of the form $vW_{T}$.  Then there exists $w,w'\in W_{T}$ such that $v\in D_{e}$, $vw\in D_{i}$, $vw'\in D_{k}$ and $\gs\in vK(U)\cap vwK(U)\cap vw'K(U)$.  These three vertices are differently colored even vertices of a cell of type $T$, so $\{s,t\}\subseteq T$ for exactly one $s\in S'$ and both $w$ and $w'$ are $t$-even.  Then, as in the proof of \ref{c:H22evens}, it follows that $D_{e}\cap D_{i}=D_{e}\cap D_{k}\cong |W'\cs_{\geq\{s,t\}}|$.  So Corollary \ref{c:H22evens} generalizes to the following: 

\begin{Cor}\label{c:H2evens} Let $F_{1}$, $F_{2},\ldots, F_{n}, F_{e}$ be even colors.  Then 
\begin{equation*}
 \cH_{2}(F_{e}\cap (\bigcup^{n}_{j=1} F_{j}))=0.
\end{equation*}
\end{Cor}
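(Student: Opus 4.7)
The plan is to reduce the statement to Corollary \ref{c:H22evens} by showing that $F_e \cap (\bigcup_{j=1}^n F_j)$ is a disjoint union of infinite connected $2$-manifolds, whence any square-summable $2$-cycle must vanish by the elementary argument recalled in the introduction for top-dimensional cycles.

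First, I would unpack the colors into individual boundary collars. By Lemma \ref{l:samecolordisjoint}, $F_e$ decomposes as a disjoint union of its constituent boundary collars $\{D_e^{(\alpha)}\}$, which gives
\begin{equation*}
F_e \cap \Bigl(\bigcup_{j=1}^n F_j\Bigr) \;=\; \bigsqcup_\alpha \Bigl(D_e^{(\alpha)} \cap \bigcup_{j=1}^n F_j\Bigr),
\end{equation*}
so it suffices to understand each summand on the right.

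Next I would invoke the dichotomy established in the paragraph immediately preceding the statement: for any three even boundary collars $D_e, D_i, D_k$, if $D_e \cap D_i \cap D_k \neq \emptyset$ then $D_e \cap D_i = D_e \cap D_k \cong |W'\cs_{\geq\{s,t\}}|$ for a unique $s \in S'$. Fix $D_e^{(\alpha)}$ and let $D$ range over all boundary collars appearing in $\bigcup_{j=1}^n F_j$; the dichotomy says the family $\{D_e^{(\alpha)} \cap D\}_D$ is pairwise-disjoint-or-equal, so selecting one representative from each equivalence class expresses $D_e^{(\alpha)} \cap \bigcup_j F_j$ as a genuine disjoint union of such intersections. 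By Proposition \ref{p:W'-Daviscpx}, each representative is isomorphic to $\gS(W', U_{st})$ for some $s \in S'$, an infinite connected $2$-manifold.

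Combining the two steps, $F_e \cap (\bigcup_j F_j)$ is a disjoint union of infinite connected $2$-manifolds, from which $\cH_2 = 0$ follows immediately. The key combinatorial work has already been done in the paragraph preceding the statement (which in turn leans on Lemma \ref{l:reduction} and Lemma \ref{l:W'-orbit}); the remaining obstacle is merely the bookkeeping that organizes the pairwise intersections into a true disjoint-union decomposition, so I do not anticipate any substantive technical difficulty.
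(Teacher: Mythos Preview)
Your proposal is correct and follows essentially the same route as the paper. The paper's argument is the paragraph immediately preceding the statement: it establishes the disjoint-or-equal dichotomy for the pairwise intersections $D_e\cap D_i$ and then simply declares that Corollary~\ref{c:H22evens} ``generalizes'' to the stated result; your write-up makes explicit the (routine) bookkeeping of unpacking colors into their constituent boundary collars and selecting representatives, which the paper leaves implicit.
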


\begin{Lem}\label{l:oneodd} Let $\cf_{E}$ denote the union of all even colors and let $F_o$ be an odd color.  Define   
\[\partial_{in}(F_{c}):=\coprod_{D\subset F_{c}}\partial_{in}(D).
\]
$F_{o}\cap \cf_{E}=\partial_{in}(F_{o})$. 
\end{Lem}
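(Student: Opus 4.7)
The plan is to prove both inclusions, leveraging two structural facts from Section~\ref{s:coxeter}: (a) each boundary collar $D = B\times[0,1]$ is a union of translates $wK(U)$ for vertices $w$ of a single component $B$ of $\partial\gO$, and the only way distinct boundary collars can overlap in $\gO$ is along their inner boundaries (the ``1-ends''); and (b) a Coxeter cell of type $T\in\cs_{\geq t}$ labeled by a coset $vW_T$ is simultaneously contained in $v'K(U)$ for \emph{every} vertex $v'\in vW_T$, since the cell is canonically associated with the coset and not with a particular representative.

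For the inclusion $F_o\cap\cf_E\subseteq \partial_{in}(F_o)$, I would take a simplex $\gs\in F_o\cap \cf_E$. By definition of the cover by colors, $\gs$ lies in some odd collar $D\subset F_o$ and in some even collar $D'\subset F_e\subseteq \cf_E$. Since $D$ and $D'$ have opposite parities, they are distinct boundary collars, and structural fact~(a) forces $\gs\in D\cap D'\subseteq \partial_{in}(D)\cap \partial_{in}(D')\subseteq \partial_{in}(F_o)$.

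For the reverse inclusion $\partial_{in}(F_o)\subseteq F_o\cap\cf_E$, take $\gs\in\partial_{in}(D)$ for some odd collar $D\subset F_o$, $D=B\times[0,1]$. The simplex $\gs$ lies in some Coxeter cell of $\gO$ of type $T\in \cs_{\geq t}$ labeled by a coset $vW_T$ with $v\in B$, and $v$ is odd by hypothesis. Because $t\in T$, the element $vt$ also lies in $vW_T$, and since $g_{U\{t\}}(vt)=g_{U\{t\}}(v)\cdot t$, the vertex $vt$ has parity opposite to that of $v$; hence $vt$ is an even vertex, belonging to some component $B'$ of $\partial\gO$ distinct from $B$ and to an even collar $D'\subset F_e$. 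By fact~(b), the cell of type $T$ labeled $vW_T = (vt)W_T$ is contained in $(vt)K(U)\subseteq D'$, and therefore $\gs\in D'\subseteq F_e\subseteq \cf_E$. Combined with $\gs\in F_o$, this gives $\gs\in F_o\cap \cf_E$.

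The main obstacle is not really any single calculation but rather pinning down the two structural facts precisely: that different boundary collars meet only along their inner boundaries, and that a simplex of $\partial_{in}(D)$ always lies in some Coxeter cell of type $T\in\cs_{\geq t}$ that is shared with every other collar containing a vertex of that cell. Both facts follow directly from the description of $D$ as $\bigcup_{w\in B} wK(U)$ together with the Coxeter cellulation of $\gO$, so once those are stated carefully the parity argument above completes the proof.
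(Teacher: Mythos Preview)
Your proposal is correct and follows essentially the same approach as the paper. The paper also reduces to a single boundary collar $D\subset F_o$ and proves both inclusions: for $\partial_{in}(D)\subseteq D\cap\cf_E$ it uses exactly your ``multiply by $t$'' trick ($wW_V=wtW_V$ with $wt$ even), and for $D\cap\cf_E\subseteq\partial_{in}(D)$ it unwinds your structural fact~(a) explicitly---if a $0$-simplex has representatives $wW_V=w'W_V$ with $w$ odd and $w'$ even, then $w^{-1}w'$ carries an odd number of $t$'s, forcing $t\in V$ and hence $\gs\in\partial_{in}(D)$. So the only difference is that you invoke the collar-intersection fact as a black box from Section~\ref{s:coxeter}, while the paper reproves the relevant instance directly via the parity count.
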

\begin{proof} Since $F_{o}$ is a disjoint union of boundary collars, it suffices to show that $D\cap \cf_{E}=\partial_{in}(D)$ for some boundary collar $D\subset F_{o}$.

($\supseteq$): Let $\gs$ be a $0$-simplex in $\partial_{in}(D)$.  Then $\gs$ corresponds to a coset of the form $wW_{V}$ where $V\in \cs_{\geq t}$ and $w\in W_{U}$ is an odd vertex of $D$.  Consider the even vertex $wt$.  Then since $t\in V$, $wW_{V}=wtW_{V}$, and $\gs\in wtK(U)\subset \cf_{E}$.

($\subseteq$): Now suppose that $\gs$ is a $0$-simplex contained in $D\cap \cf_{E}$.  Then there exists a spherical subset $V$ and cosets $wW_V=w'W_V$ where $w$ is odd and $w'$ is even.  Let $v=w^{-1}w'$.  Since $w$ is odd and $w'$ is even, $v$ must contain an odd number of $t$'s in any of its reduced expressions.  Therefore $t\in V$ and $\gs\in\partial_{in}(D)$.
\end{proof}

As before, let $\cf_{E}$ denote the union of all even colors, and now let $\cf_{O}$ denote the union of a sub-collection of the odd colors.  Let $\cf_{E'}=\cf_{E}\cup \cf_{O}$ and let $F_{o}$ be an odd color not in $\cf_{O}$.  Then by \ref{l:oneodd}, 
\begin{equation*}
	F_{o}\cap \cf_{E'}= (F_{o}\cap \cf_{E})\bigcup (F_{o}\cap \cf_{O})=\partial_{in}(F_{o})\bigcup (F_{o}\cap \cf_{O}).
\end{equation*}
Any $0$-simplex in $F_o$ which is also in a different color must be of the form $wW_V$, where $w$ is a vertex of $F_o$ and $V\in\cs_{\geq T}$.  Therefore $(F_{o}\cap \cf_{O})\subset \partial_{in}(F_{o})$ and $F_{o}\cap \cf_{E'}=\partial_{in}(F_{o})$.  

It is clear from the product structure on boundary collars that $\partial_{in}(F_o)\cong F_o\cap\partial\gO$, the latter a disjoint collection of components of $\partial\gO$.  Since $L$ is flag, we have a 1-1 correspondence between cells of any component of $\partial\gO$ and cells of $\gS(W_{U-t},U-t)_{cc}$.  Denote by $L_t$ the link in $L$ of the vertex corresponding to $t$, it is a triangulation of $\BS^2$ and it is isomorphic to the nerve of $(W_{U-t},U-t)$.  Then since Conjecture \ref{conj:singerc} is true in dimension $3$,
\begin{equation}\label{e:multipleodds}
	\cH_{i}(F_{o}\cap \cf_{E'})=0,
\end{equation}
for all $i$.

\begin{Proposition}\label{p:H3,4(S)=0} Let $(W,S)$ be an even Coxeter system whose nerve, $L$ is a flag triangulation of $\BS^3$.  Let $t\in S$.  Then $\cH_{\ast}(\gO(S,t),\partial\gO(S,t))=0$ for $\ast=3,4$.
\end{Proposition}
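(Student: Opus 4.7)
I first reduce to the connected component: since $\gO(S,t)$ decomposes as a disjoint union of $W$-translates of $\gO$ with stabilizer $W_U$, its relative $\ell^2$-homology is induced from that of $(\gO,\partial\gO)$, so it suffices to show $\cH_3(\gO,\partial\gO)=\cH_4(\gO,\partial\gO)=0$. The strategy is to build $\gO$ up as an iterated union of colors and apply relative Mayer--Vietoris at each stage. As a warm-up, every single color $F_c$ is $\ell^2$-acyclic relative to its outer boundary: Lemma \ref{l:samecolordisjoint} makes $F_c$ a literal disjoint union of boundary collars $D=B\times[0,1]$ with $F_c\cap\partial\gO=\bigsqcup B\times\{0\}$, and this pair deformation retracts onto the inner end $\partial_{in}(F_c)$, a disjoint union of copies of $\gS(W_{U-t},U-t)$. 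Since $L_t$ is a flag triangulation of $\BS^2$, each summand is an aspherical $3$-manifold; Conjecture \ref{conj:singerc} in dimension $3$ (invoked already in the derivation of (\ref{e:multipleodds})) then gives $\cH_\ast(\partial_{in}(F_c))=0$ in every degree.

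\textbf{Even colors.} Organize the even colors into $W_U$-orbits $O_1,\dots,O_n$ and let $\cf_E^k:=F_{O_1}\cup\cdots\cup F_{O_k}$. I induct on $k$ using the relative Mayer--Vietoris sequence for $\cf_E^k=\cf_E^{k-1}\cup F_{O_k}$. Since distinct even colors share only interior cells, the overlap $\cf_E^{k-1}\cap F_{O_k}$ meets $\partial\gO$ trivially, reducing the intersection term to the absolute homology of $\cf_E^{k-1}\cap F_{O_k}$. This overlap is a disjoint union of spaces of the form $F_e\cap(\bigcup_j F_j)$ with every color even; Proposition \ref{p:W'-Daviscpx} says each pairwise intersection is $2$-dimensional, so the whole overlap is $2$-dimensional and $\cH_n=0$ for $n\geq 3$, while Corollary \ref{c:H2evens} yields $\cH_2=0$. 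Combined with the warm-up and the inductive hypothesis, the Mayer--Vietoris sequence in degrees $3$ and $4$ reads $0\to 0\to\cH_n(\cf_E^k,\cdot)\to 0$, so $\cH_{3,4}(\cf_E^k,\cdot)=0$.

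\textbf{Odd colors and conclusion.} Now append the $W_U$-orbits of odd colors one at a time to $\cf_E:=\cf_E^n$. When a new orbit containing $F_o$ is joined to the current union $\cf_{prev}$, Lemma \ref{l:oneodd} and the paragraph preceding (\ref{e:multipleodds}) identify the overlap as a disjoint union of inner boundaries $\partial_{in}(F_o)$, which again lies in the interior and whose reduced $\ell^2$-homology vanishes in every degree by (\ref{e:multipleodds}). The Mayer--Vietoris error terms are therefore $0$ throughout, and $\cH_{3,4}$ of the relative pair is preserved. After finitely many steps the partial union is all of $\gO$ with relative part $\partial\gO$, giving the proposition. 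The main subtlety in carrying this out is equivariance: the colors $F_c$ are only permuted (not fixed) by $W_U$, so the inductive unions must be assembled $W_U$-orbit by $W_U$-orbit in order to remain $W_U$-equivariant and permit reduced $\ell^2$-Mayer--Vietoris in the category of Hilbert $\cn(W_U)$-modules. Beyond this bookkeeping, all the genuine geometry has been absorbed into Corollary \ref{c:H2evens} and equation (\ref{e:multipleodds}).
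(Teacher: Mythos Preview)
Your Mayer--Vietoris strategy---add the even colors, then the odd, using Corollary \ref{c:H2evens} and equation (\ref{e:multipleodds}) for the intersection terms---is exactly the paper's argument for $\cH_3$. The gap is in the equivariance bookkeeping you flag at the end. By definition the realized colors are the elements $w\cdot\bar e$ for $w\in W_U$, i.e.\ precisely the $W_U$-orbit of $\bar e$ in $A$; so the colors that actually occur form a \emph{single} $W_U$-orbit, and moreover $t\in W_U$ already exchanges the even colors with the odd ones. Hence ``organize the even colors into $W_U$-orbits $O_1,\dots,O_n$'' cannot be done: the set of even colors is not a union of $W_U$-orbits, and if you use genuine $W_U$-orbits there is only one, so your induction collapses to a single step (from $\emptyset$ to all of $\gO$), for which the warm-up---valid only for a single $F_c$, not for a union of colors---gives nothing.

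The correct fix is the one the paper signals in the introduction (``the painting of $\gO$ is virtually invariant''): pass to the finite-index normal subgroup $H\le W_U$ that acts trivially on $A$. Each color $F_c$ is $H$-invariant with $H$ acting cocompactly, so one may add colors one at a time in the category of Hilbert $\cn(H)$-modules; that is exactly the paper's color-by-color induction. Vanishing over $H$ then gives vanishing over $W_U$ since the underlying $\ell^2$-chain complexes coincide. Two smaller points. First, the pair $(D,B\times\{0\})$ does not deformation retract onto $\partial_{in}(D)$; rather $D$ retracts onto $B\times\{0\}$, which already yields $\cH_\ast(F_c,F_c\cap\partial\gO)=0$ without invoking Singer in dimension $3$. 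Second, the paper handles $\cH_4$ more cheaply than your induction: $\gO$ is a $4$-manifold with boundary and $\partial\gO$ an infinite $3$-manifold, so $\cH_4(\gO)=\cH_3(\partial\gO)=0$, and the long exact sequence of the pair gives $\cH_4(\gO,\partial\gO)=0$ at once.
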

\begin{proof} We first show that $\cH_{4}(\gO,\partial\gO)=0$.  Consider the long exact sequence of the pair $(\gO,\partial\gO)$:
\[\rightarrow\cH_4(\gO)\rightarrow\cH_4(\gO,\partial\gO)\rightarrow\cH_3(\partial\gO)\rightarrow
\]
$\gO$ is a $4$-dimensional manifold with boundary, so $\cH_4(\gO)=0$ and $\cH_3(\partial\gO)=0$.  So by exactness, $\cH_4(\gO,\partial\gO)=0$.

Let $\cf_{E'}$ denote the union of a collection of even colors or the union of all evens and a collection of odd colors.  Let $F$ be a color not contained in $\cf_{E'}$ (if $\cf_{E'}$ is not all the even colors, require that $F$ be an even color).  Let $\partial_{E'}=\cf_{E'}\cap\partial\gO$ and let $\partial_F=F\cap\partial\gO$.  Note that $\partial_{E'}\cap\partial_{F}=\emptyset$ and consider the relative Mayer-Vietoris sequence of the pair $(\cf_{E'}\cup F, \partial_{E'}\cup \partial_{F})$:
\[\ldots\rightarrow\cH_{3}(\cf_{E'},\partial_{E'})\oplus\cH_{3}(F,\partial_F)\rightarrow\cH_{3}(\cf_{E'}\cup F,\partial_{E'}\cup\partial_{F})\rightarrow\cH_{2}(\cf_{E'}\cap F)\rightarrow\ldots\]
Assume that $\cH_{3}(\cf_{E'},\partial_{E'})=0$.  Each color retracts onto its boundary, so $\cH_{3}(F,\partial_{F})=0$.  If $F$ is even, then the last term vanishes by \ref{c:H2evens}, if $F$ is odd, then the last term vanishes by (\ref{e:multipleodds}).  In either case, exactness implies that $\cH_{3}(\cf_{E'}\cup F,\partial_{E'}\cup\partial_F)=0$.  It follows from induction that $\cH_{3}(\gO,\partial\gO)=0$.
\end{proof}

\section{The $\ell^2$-homology of $\gS$}

\begin{Lem}\label{l:H4-2letter} Let $V\subseteq S$ and let $T\subseteq V$ be a spherical subset with $\Card(T)=2$.  Then $\cH_{4}(\gO(V,T),\partial\gO(V,T))=0$. 
\end{Lem}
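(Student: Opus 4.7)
My plan is to adapt the top-degree strategy of Proposition \ref{p:H3,4(S)=0}, working directly with the relative top chain complex.

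First, note that $\gO(V,T)$ has no cells of dimension greater than $4$, and $\partial\gO(V,T)$ contains no $4$-cells---any $4$-cell has a $4$-element type $T'\supseteq T$, placing it in $\gO(V,T)\setminus\partial\gO(V,T)$. Hence $\cH_4(\gO(V,T),\partial\gO(V,T))$ equals the kernel of the relative boundary $\partial_4^{\mathrm{rel}}\colon C_4(\gO(V,T))\to C_3(\gO(V,T))/C_3(\partial\gO(V,T))$. A relative $4$-cycle is an $\ltwo$-$4$-chain $c$ whose boundary $\partial c$ vanishes on every \emph{interior} $3$-cell---one of type $T''$ with $T\subseteq T''\in\cs(V)$ and $|T''|=3$.

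Next I would analyze the local structure at each interior $3$-cell. Such a $3$-cell is a face of the $4$-cells of type $T''\cup\{x\}$ with $T''\cup\{x\}\in\cs(V)$, which correspond to vertices $x$ of the link $L_{T''}$ that lie in $V$. Because $L$ is a flag triangulation of $\BS^3$ and $T''$ is a $2$-simplex, $L_{T''}=\BS^0$, so each interior $3$-cell is a face of at most two $4$-cells of $\gO(V,T)$. The cycle condition therefore either forces a lone adjacent coefficient to vanish (\emph{one-sided} case) or couples two coefficients up to sign (\emph{two-sided} case). Letting $M$ denote the union of closed $4$-cells together with the two-sided interior $3$-cells and all their faces, $M$ is a $4$-manifold with boundary, and the coefficient function of $c$ is locally constant on connected components of $M$ and vanishes on any component containing a one-sided interior $3$-cell.

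Finally, I would invoke the standard $\ltwo$ top-degree vanishing: on any infinite component of $M$, a locally constant $\ltwo$-function must be identically zero; and every component of $M$ either is infinite or contains a one-sided interior $3$-cell (so the coefficient vanishes by propagation). Together these give $c=0$, hence $\cH_4(\gO(V,T),\partial\gO(V,T))=0$. The main obstacle I expect is ruling out a finite closed component of $M$---a compact closed $4$-manifold built from finitely many Coxeter $4$-cells glued only across two-sided interior $3$-cells. I would handle this by noting that any $4$-cell of type $T\cup\{a,b\}$ has interior $3$-face extensions governed by $L_{T\cup\{a\}}$ and $L_{T\cup\{b\}}$, each a $\BS^0$ in $L$, so whenever the ``other'' extension fails to lie in $V$ the face becomes one-sided, preventing a genuinely closed finite configuration and thereby finishing the proof.
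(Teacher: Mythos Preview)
Your setup is fine and matches the paper's: relative $4$-cycles are $\ell^2$ functions on $4$-cells whose boundary vanishes on interior $3$-faces, each interior $3$-face has at most two adjacent $4$-cells, and the cycle condition propagates coefficients across two-sided faces and kills them at one-sided faces. The trouble is your final paragraph, which is precisely where the content lies.

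You correctly flag the obstacle---ruling out a finite ``closed'' component of $M$---but your proposed resolution is a non-sequitur. You observe that \emph{if} the other vertex of $L_{T\cup\{a\}}$ fails to lie in $V$ then the face is one-sided; that just restates case (b). You give no argument for what happens when all such extensions \emph{do} lie in $V$, which is exactly the case you need to exclude. Nothing you have written prevents a finite cycle of $4$-cells glued pairwise along two-sided interior $3$-faces.

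The paper closes this gap with a single concrete observation you are missing. If two $4$-cells share an interior $3$-face of type $R\supseteq T$ (so $|R|=3$), their types are $T'=R\cup\{r\}$ and $T''=R\cup\{s\}$ with $r\neq s$. Since $L$ is flag and $3$-dimensional, $R\cup\{r,s\}$ cannot be spherical, hence $m_{rs}=\infty$. Then the gallery
\[
W_{T'},\ W_{T''},\ sW_{T'},\ srW_{T''},\ srsW_{T'},\ \ldots
\]
consists of pairwise distinct $4$-cells (the cosets are distinct because $\langle r,s\rangle$ is infinite dihedral and $r\notin T''$, $s\notin T'$), each adjacent to the next across a $3$-face of type $R$. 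The cycle coefficient is therefore constant along an infinite sequence and hence zero. In your language: any component of $M$ containing a two-sided interior $3$-face is automatically infinite, so the ``finite closed'' case never occurs. Insert this argument and your proof is complete; without it, the proof has a genuine hole.
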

\begin{proof} If $\cs(V)^{(4)}_{>T}=\emptyset$, then $\gO(V,T)$ does not contain 4-dimensional cells, and we are done.  So assume that $\cs(V)^{(4)}_{>T}\neq\emptyset$.  The codimension 1 faces of $4$-cells of $\gO(V,T)$ are either faces of one other $4$-cell in $\gO(V,T)$ ($\gS$ is a $4$-manifold), or they are free faces, i.e they are not faces of any other $4$-cell in $\gO(V,T)$.

Suppose that cells of type $T'\in\cs(V)^{(4)}_{>T}$ have a co-dimension one face of type $F$ which is a face of another $4$-cell in $\gO(V,T)$ of type $T''$.  Then any relative $4$-cycle must be constant on adjacent cells of type $T'$ and $T''$, where $T'=R\cup \{r\}$, and $T''=R\cup \{s\}$, for some $R\in\cs(V)^{(3)}$ and $r,s\in V$.  Since $L$ is flag and $3$-dimensional, $m_{rs}=\infty$.  So in this case, there is a sequence of adjacent $4$-cells with vertex sets $W_{T'},W_{T''},sW_{T'},srW_{T''},srsW_{T'},srsrW_{T''},\ldots$.  Hence, this constant must be $0$.  

Now suppose that for a given $4$-cell of $\gO(V,T)$, every co-dimension one face is free.  This cell has faces not contained in $\partial\gO(V,T)$, so relative $4$-cycles cannot be supported on this cell.  
\end{proof}

Let $V\subseteq S$, be arbitrary; $T\subseteq V$ spherical, $\gO:=\gO(V,T)$, $\partial\gO:=\partial\gO(V,T)$.  Recall that $\gS(V)$ is the subcomplex of $\gS_{cc}$ consisting of cells of type $T'$, with $T'\subseteq V$.  We have excision isomorphisms from \cite{ddjo}:
\begin{equation}\label{e:excision1}
	C_{\ast}(\gO(V,T),\partial\gO)\cong C_{\ast}(\gS(V),\wh{\gO}(V,T)),
\end{equation}
and for any $s\in T$ and $T':=T-s$, 
\begin{equation}\label{e:excision2}
	C_{\ast}(\gS (V-s),\wh{\gO}(V-s,T'))\cong
	C_{\ast}(\wh{\gO}(V,T),\wh{\gO}(V,T')).
\end{equation}
Set $\wh{\gO}:=\wh{\gO}(V,T)$, and $\wh{\gO}':=\wh{\gO}(V,T')$.  Consider the long, weakly exact sequence of the triple $(\gS(V),\wh{\gO},\wh{\gO}')$:
\[
	\ldots\to \cH_{\ast}(\wh{\gO},\wh{\gO}')\to
	\cH_{\ast}(\gS(V),\wh{\gO}')\to
  \cH_{\ast}(\gS(V),\wh{\gO})\to\ldots  
\]
By (\ref{e:excision1}) and (\ref{e:excision2}), the left hand term excises to the homology of the $(V-s,T')$-ruin, the right hand term to that of the $(V,T)$-ruin and the middle term to that of the $(V,T')$-ruin; leaving the sequence:
\begin{equation}\label{e:ruinsequence}
	\ldots\to\cH_{\ast}(\gO(V-s,T'),\partial)\to
	\cH_{\ast}(\gO(V,T'),\partial)\to
	\cH_{\ast}(\gO(V,T),\partial)\to\ldots
\end{equation}

\begin{Proposition}\label{p:one-letter} Let $(W,S)$ be an even Coxeter system, whose nerve $L$ is a flag triangulation of $\BS^{3}$.  Let $V\subseteq S$ and $t\in V$.  Then 
\begin{equation}\label{e:one-letter}
	\cH_{\ast}(\gO(V,t),\partial\gO(V,t))=0,
\end{equation}
for $\ast=3,4$.
\end{Proposition}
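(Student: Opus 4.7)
The plan is to induct on $|S-V|$. The base case $V=S$ is immediate from Proposition~\ref{p:H3,4(S)=0}. For the inductive step I would fix $V\subsetneq S$ with $t\in V$, assume the conclusion for every $V'$ with $V\subsetneq V'\subseteq S$, and split into two subcases according to whether some $r\in S-V$ has $m_{rt}<\infty$.

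\emph{Case 1} ($m_{rt}<\infty$ for some $r\in S-V$). Set $V^{+}:=V\cup\{r\}$, so that $\{r,t\}$ is a spherical subset of $V^{+}$. I would apply the ruin long exact sequence~(\ref{e:ruinsequence}) to $V^{+}$ with $s=r$, $T=\{r,t\}$, $T'=\{t\}$, yielding the fragment
\[
\cH_{\ast+1}(\gO(V^{+},\{r,t\}),\partial)\to\cH_{\ast}(\gO(V,t),\partial)\to\cH_{\ast}(\gO(V^{+},t),\partial).
\]
For $\ast=3$ the left term vanishes by Lemma~\ref{l:H4-2letter} and the right term by the inductive hypothesis applied to $V^{+}$. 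For $\ast=4$ the left term vanishes for dimensional reasons: since $L$ triangulates $\BS^{3}$, every spherical subset of $S$ has at most $4$ elements, so $\gO(V^{+},\{r,t\})$ has dimension at most $4$ and its $\cH_{5}$ is zero; the right term again vanishes by induction. Exactness then forces $\cH_{\ast}(\gO(V,t),\partial)=0$ for $\ast=3,4$.

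\emph{Case 2} ($m_{rt}=\infty$ for every $r\in S-V$). Here $U_{V}:=\{s\in V\mid m_{st}<\infty\}$ coincides with $U_{S}:=\{s\in S\mid m_{st}<\infty\}$, so the subposets of spherical subsets containing $t$ in $(W_{V},V)$ and in $(W,S)$ are identical, and the fundamental domain $K(U_{V})$ agrees with $K(U_{S})$. Consequently the identity component $\gO_{V}$ of $\gO(V,t)$ coincides, as a $W_{U_{V}}$-complex with its boundary, with the identity component of $\gO(S,t)$; both $\gO(V,t)$ and $\gO(S,t)$ are disjoint unions of translates of this common component, indexed by $W_{V}/W_{U_{V}}$ and by $W/W_{U_{S}}$ respectively. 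Invoking the standard Shapiro-type principle that reduced $\ell^{2}$-homology is preserved and detected under equivariant induction from a subgroup, the vanishing of $\cH_{\ast}(\gO(S,t),\partial)$ in degrees $3,4$ from Proposition~\ref{p:H3,4(S)=0} would descend to the common component and then induce forward to give $\cH_{\ast}(\gO(V,t),\partial)=0$ in degrees $3,4$.

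The main obstacle I anticipate is Case~2, which is not handled by the ruin long exact sequence and instead demands the $\ell^{2}$-induction principle above to transfer vanishing between $\gO(S,t)$ and $\gO(V,t)$ through their common component; once that principle is granted, both cases reduce to straightforward bookkeeping with the ruin exact sequence~(\ref{e:ruinsequence}) and Lemma~\ref{l:H4-2letter}.
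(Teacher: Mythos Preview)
Your overall scheme---downward induction on $|S-V|$ using the ruin exact sequence~(\ref{e:ruinsequence}) together with Lemma~\ref{l:H4-2letter}---is exactly the paper's argument, and your Case~1 matches it line for line.

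In Case~2, however, you are working much harder than necessary and your description of $\gO(V,t)$ is off. In the paper's conventions $\gS(V)$ and $\gO(V,t)$ are subcomplexes of the full Davis complex $\gS(W,S)_{cc}$, not of $\gS(W_V,V)$: the cells of $\gO(V,t)$ are the cosets $wW_{T'}$ with $w$ ranging over all of $W$ and $T'\in\cs(V)_{\ge t}$. Hence the path components of $\gO(V,t)$ are indexed by $W/W_{U_V}$, not by $W_V/W_{U_V}$. Under your Case~2 hypothesis one has $U_V=U_S$ and $\cs(V)_{\ge t}=\cs(S)_{\ge t}$, so in fact $\gO(V,t)=\gO(S,t)$ and $\partial\gO(V,t)=\partial\gO(S,t)$ \emph{as subcomplexes of $\gS_{cc}$}; no Shapiro-type transfer is needed at all. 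This is precisely how the paper handles the situation (one generator at a time): if $m_{st}=\infty$ then $\gO(V',t)=\gO(V,t)$ outright, and the inductive step is vacuous. So the ``main obstacle'' you anticipate does not exist; once you correct the interpretation of $\gO(V,t)$, Case~2 is a one-line observation.
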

\begin{proof}  It is clear that $\cH_{\ast}(\gO(V,t))=0$ for $\ast=3,4$ whenever $\Card(V)\leq 2$, so we may assume that $\Card(V)>2$.  We show (\ref{e:one-letter}) by induction on $\Card(S-V)$, Proposition \ref{p:H3,4(S)=0} giving us the base case.  Let $V=V'\cup s$ and $t\in V'$.  Assume (\ref{e:one-letter}) holds for $V$.  If $m_{st}=\infty$ then $\gO(V',t)=\gO(V,t)$ and we are done.  Otherwise, consider the sequence in (\ref{e:ruinsequence}), taking $T=\{s,t\}$, $T'=\{t\}$:
\[
\begin{array}{cccccccc}
0 & \rightarrow & \cH_{4}(\gO(V',t),\partial) & \rightarrow &  \cH_{4}(\gO(V,t),\partial) & \rightarrow &  \cH_{4}(\gO(V,\{s,t\}),\partial) &\to \\
& \rightarrow & \cH_{3}(\gO(V',t),\partial) & \rightarrow & \cH_{3}(\gO(V,t),\partial) & \rightarrow & \ldots 
\end{array}
\]
$\cH_{\ast}(\gO(V,t),\partial)=0$ for $\ast=3,4$ by assumption and $\cH_{4}(\gO(V,\{s,t\}),\partial)=0$ by \ref{l:H4-2letter}.  So by exactness,  $\cH_{4}(\gO(V',t),\partial)=0$.
\end{proof}

\begin{MTheorem}\label{t:Singer} Let $(W,S)$ be an even Coxeter system whose nerve $L$ is a flag triangulation of $\BS^{3}$ and let $\gS=\gS(W,S)$.  Then 
\[
\cH_{\ast}(\gS)=0 \text{ for } \ast\neq 2.
\]
\end{MTheorem}
\begin{proof} Let $V\subseteq S$ and $t\in V$.  Consider the following form of (\ref{e:ruinsequence}), where $T=\{t\}$:
\[
\begin{array}{ccccccccc}
	0 & \to & \cH_{4}(\gS(V-t)) & \to & \cH_{4}(\gS(V)) & \to & \cH_{4}(\gO(V,t),\partial) & \to \\
 & \to & \cH_{3}(\gS(V-t)) & \to & \cH_{3}(\gS(V)) & \to & 	\cH_{3}(\gO(V,t),\partial) & \to & \ldots 
\end{array}
\]
By Proposition \ref{p:one-letter}, $\cH_{\ast}(\gO(V,t),\partial)=0$ for $\ast=3,4$.  So by exactness,
\[
\cH_{\ast}(\gS(V-t))\cong\cH_{\ast}(\gS(V)),
\]
for $\ast=3,4$.  It follows that $\cH_{\ast}(\gS)\cong\cH_{\ast}(\gS(\emptyset))=0$ for $\ast=3,4$ and hence, by Poincar\'e duality, $\cH_{\ast}(\gS)=0$ for $\ast\neq 2$.
\end{proof}
\newpage


\begin{thebibliography}{88}

\bibitem{andreev2}
E. Andreev, \emph{On convex polyhedra of finite volume in
Loba\u{c}evski\u{i} space},  Math. USSR SB. \textbf{12} (1970), 255--259.

\bibitem{bourbaki}
N. Bourbaki, \emph{Lie Groups and Lie Algebras}, Chapters 4-6.  Springer, 2002.

\bibitem{davisannals} M. Davis, \emph{Groups generated by reflections and aspherical manifolds not covered by Euclidean space}, Ann. of Math.  117 (1983) 293-294.

\bibitem{davisbook}
M. Davis, \emph{The Geometry and Topology of Coxeter Groups}, Princeton University Press, Princeton, (2007).

\bibitem{ddjo}
M. Davis, J. Dymara, T. Januszkiewicz, and B. Okun, \emph{Weighted $\ltwo$-cohomology of Coxeter groups}, Geometry \& Topology 11 (2007), 47-138.

\bibitem{davismoussong}
M. Davis and G. Moussong, \emph{Notes on Nonpositively Curved Polyhedra}, Ohio State Mathematical Research Institute Preprints, (1999).

\bibitem{do2}
M. Davis and B. Okun, \emph{Vanishing theorems and conjectures for the $\ltwo$-homology of right-angled Coxeter groups}, Geometry \& Topology 5 (2001), 7-74.

\bibitem{eckmann}
B. Eckmann, \emph{Introduction to $\ltwo$-Homology}, Notes by Guido Mislin, based on on lectures by Beno Eckmann, (1998).

\bibitem{LL}
J. Lott, W. L\"uck, $\Ltwo$-topological invariants of $3$-manifolds, Invent. Math. 120 (1995) 15-60.

\bibitem{moussong}
G. Moussong, \emph{Hyperbolic Coxeter Groups}. Dissertation, The Ohio State University, 1988.

\end{thebibliography}
\end{document}